\newcommand{\MDP}[1]{\mathcal{M}_{#1}}
\newcommand{\Pol}[2]{\pi^\mathcal{#1}_{#2}}
\newcommand{\ReachSet}[2]{R^\mathcal{#1}_{#2}}
\newcommand{\NumRounds}{m_{r}}
\newcommand{\sd}[1]{S_{d,#1}}
\newcommand{\KL}[2]{\textrm{KL}\left(\Gamma^{#1}|| \Gamma^{#2}\right)}
\newcommand{\pathSeq}{\boldsymbol{\xi}}
\newcommand{\polSpace}[1]{\Pi(\mathcal{M}_{#1})}
\newcommand{\reachDisjunc}[2]{}
\newcommand{\isDec}[1]{#1 \ \textrm{is Deceptive}}
\newcommand{\isNotDec}[1]{#1 \ \textrm{is Not Deceptive}}
\newcommand{\DecPri}[1]{\Pr(\isDec{#1})}
\newcommand{\DecCond}[1]{\Pr(\pathSeq_#1|\isDec{#1})}
\newcommand{\NonDecCond}[1]{\Pr(\pathSeq_#1|\isNotDec{#1})}
\newcommand{\DecPriShort}[1]{p_{\mathcal{D},#1}}
\newcommand{\Belief}[1]{\theta_i(\boldsymbol{\xi}_{#1})}
\newcommand{\BeliefProxy}[2]{\hat{\theta}_{#1}(#2)}
\newcommand{\BeliefProxyTEXT}[2]{\smash{\hat{\theta}_{#1}(#2)}}
\newcommand{\AgProbSimple}{worst-case deceptive policy synthesis}
\newcommand{\AgProbSimpleTitle}{Worst-Case Deceptive Policy Synthesis}
\newcommand{\AgProbFull}{elimination-aware deceptive policy synthesis}
\newcommand{\AgProbFullTitle}{Elimination-Aware Deceptive Policy Synthesis}
\begin{document}
\title{A Decentralized Shotgun Approach for Team Deception}
%
%
\author{Caleb Probine\textsuperscript{(\Letter)} \and
Mustafa O. Karabag \and
Ufuk Topcu}
\authorrunning{C. Probine, M.O. Karabag, U. Topcu}
%
\institute{The University of Texas at Austin
\\ \email{\{cprobine,karabag,utopcu\}@utexas.edu}
}

\maketitle              
\begin{abstract}

Deception is helpful for agents masking their intentions from an observer. 
We consider a team of agents deceiving their supervisor. 
The supervisor defines nominal behavior for the agents via reference policies, but the agents share an alternate task that they can only achieve by deviating from these references.
As such, the agents use deceptive policies to complete the task while ensuring that their behaviors remain plausible to the supervisor.
We propose a setting with centralized deceptive policy synthesis and decentralized execution. 
We model each agent with a Markov decision process and constrain the agents' deceptive policies so that, with high probability, at least one agent achieves the task.
We then provide an algorithm to synthesize deceptive policies that ensure the deviations of all agents are small by minimizing the worst Kullback-Leibler divergence between any agent's deceptive and reference policies.
Thanks to decentralization, this algorithm scales linearly with the number of agents and also facilitates the efficient synthesis of reference policies.
We then explore a more general version of the deceptive policy synthesis problem.
In particular, we consider a supervisor who selects a subset of agents to eliminate based on the agents' behaviors. We give algorithms to synthesize deceptive policies so that, after the supervisor eliminates some agents, the remaining agents complete the task with high probability.
We demonstrate the developed methods in a package delivery example.

\keywords{Team deception \and Markov decision processes \and Centralized planning, decentralized execution. }
\end{abstract}
\section{Introduction}

In interactions with asymmetric information, agents can use deception to create an advantage against an opponent.
Examples of applications where deception is useful include human-robot interaction \cite{dragan2014} and intrusion or defense of cyber systems \cite{han2018,janczewski2007}. 
We consider a setting where a system manager
assigns policies to the system's components so that they complete a task in a distributed manner.
For example, one may assign decentralized policies to a team of aerial vehicles to complete a search task \cite{bahnemann2017}.
An adversary who gains control of the components, such as an external intruder, may change the components' policies to serve their own goals.
However, the system manager may supervise the agents. As such, the adversary must choose deceptive policies that deviate from assigned behavior in a plausible manner. Otherwise, the manager will detect the deviation.

We study the synthesis of deceptive policies for the components so that we may understand the weaknesses of these systems and improve their security.
To be consistent with \cite{karabag2021}, we label components as agents and the manager as their supervisor. Figure~\ref{fig:ProcessOutline} then shows the setup we consider.
The supervisor first assigns reference policies to the agents. The agents then collude to find deceptive policies so that the agents complete a shared alternate task. 
The agents must choose deceptive policies so that, after the supervisor observes the agents' behavior, they do not detect the agents' deviations and eliminate them.

\tikzmath{\a1 = 0.9; \xc = 1;
\x1 = \xc - \a1; \x2 = \xc ; \x3 = \xc + \a1;
\bRecW = 1.45;
\bRecH = 1;
\ya = 1.2;
\ys = 2.3;}
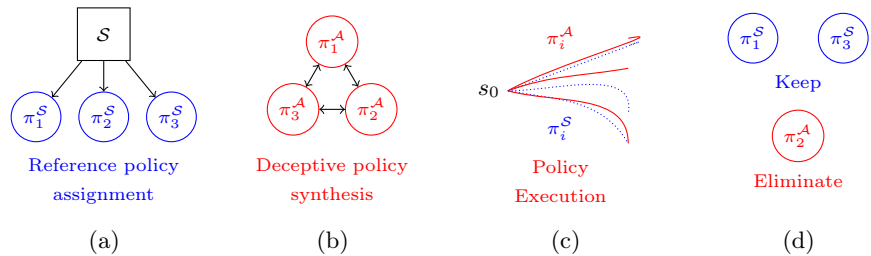
\begin{figure}[h]
\centering
\newcommand{\sfSep}{0.24}
\begin{subfigure}{\sfSep\textwidth}
\begin{tikzpicture}
    \pgfdeclarelayer{nodelayer}
    \pgfdeclarelayer{edgelayer}
    \pgfsetlayers{nodelayer,edgelayer}
    \begin{pgfonlayer}{nodelayer}
        \path (\x1,\ya) node[circle,draw,inner sep=2pt,blue](1) {\scriptsize $\pi^\mathcal{S}_1$}
            (\x2,\ya) node[circle,draw,inner sep=2pt,blue](2) {\scriptsize $\pi^\mathcal{S}_2$}
            (\x3,\ya) node[circle,draw,inner sep=2pt,blue](3) {\scriptsize $\pi^\mathcal{S}_3$}
              (\x2,\ys) node[rectangle,draw,minimum size = 0.7cm](s) {\scriptsize $\mathcal{S}$};
        \node[blue, align=center] at (\x2,\ya-0.85) {\scriptsize Reference policy \\ \scriptsize  assignment};
    \end{pgfonlayer}
	\begin{pgfonlayer}{edgelayer}
        \draw[rounded corners=0.5cm,draw opacity=0] (\xc - \bRecW,0) rectangle (\xc + \bRecW,\bRecH) {};
	      \draw[->] (s) -- (1);  
	      \draw[->] (s) -- (2);  
	      \draw[->] (s) -- (3);  
 \end{pgfonlayer}
\end{tikzpicture}
\subcaption{}
\end{subfigure}
\begin{subfigure}{\sfSep\textwidth}
\tikzmath{\n = 3; \r =0.6;\ya = 1.2;}
\begin{tikzpicture}
    \foreach \s in {1,...,\n }
    {
        \tikzmath{\x = \xc +  \r*sin(360/\n * (\s - 1)); 
                    \y = 1.6 + \r*cos(360/\n * (\s - 1));}
        \node[draw,circle,inner sep=2pt,red](\s) at (\x,\y) {\scriptsize $\pi^\mathcal{A}_\s$};
    }
    \tikzmath{\p = \n-1;}
    \foreach \s in {0,...,\p }
    {
        \tikzmath{\t = int(mod(\s+1,3) +1) ; \q = int(\s + 1);}
        \draw[<->] (\q) -- (\t);
    }
    \draw[rounded corners=0.5cm,draw opacity=0] (\xc - \bRecW,0) rectangle (\xc + \bRecW,\bRecH) {};
    \node[red, align=center] at (\x2,\ya-0.85) {\scriptsize Deceptive policy \\ \scriptsize synthesis};
\end{tikzpicture}
\subcaption{}
\end{subfigure}
\begin{subfigure}{0.25\textwidth}
\tikzmath{\y1 = 2.3; \y2 = 1.6; \y3 = 0.9; \xt1 = \x1 +0.2; \xlab1 = \xt1 - 0.25;}
\begin{tikzpicture}
    \node (s0) at (\xlab1,\y2) {$s_0$};

    \node (10) at (\xt1,\y1) {};
    \node (11) at (\x3,\y1) {};
    \node (11d) at (\x3,\y1-0.1) {};
    \node (20) at (\xt1,\y2) {};
    \node (21) at (\x3,\y2-0.3) {};
    \node (21d) at (\x3,\y2+0.3) {};
    \node (30) at (\xt1,\y3) {};
    \node (31) at (\x3,\y3) {};

    \node[red] (12) at (\x2,\y2 + 0.7) {\scriptsize $\pi^\mathcal{A}_i$};
    \node[blue] (13) at (\x2,\y2 - 0.5) {\scriptsize $\pi^\mathcal{S}_i$};
    \draw [in=10, out=30,red] (20.center) to (11.center) ;
    \draw [in=20, out=20,blue,densely dotted] (20.center) to (11d.center) ;
    
    \draw [in=190, out=20,red] (20.center) to (21d.center) ;
    \draw [in=80, out=0,blue,densely dotted] (20.center) to (21.center) ;
    
    \draw [in=120, out=-10,blue,densely dotted] (20.center) to (31.center) ;
    \draw [in=90, out=-15,red] (20.center) to (31.center) ;

    \draw[rounded corners=0.5cm,draw opacity=0] (\xc - \bRecW,0) rectangle (\xc + \bRecW,1.3) {};
    \draw[rounded corners=0.5cm,draw opacity=0] (\xc - \bRecW,1.35) rectangle (\xc + \bRecW,2.65) {};
    \draw[rounded corners=0.5cm,draw opacity=0] (\xc - \bRecW,2.7) rectangle (\xc + \bRecW,\bRecH) {};
    \node[align=center,red] at (\x2,\ya-0.8) {\scriptsize Policy \\ \scriptsize Execution};
\end{tikzpicture}
\subcaption{}
\end{subfigure}
\begin{subfigure}{\sfSep\textwidth}
\tikzmath{
    \xa2 = 0.6;
    \x1 = \xc - \xa2;
    \x2 = \xc;
    \x3 = \xc + \xa2;
}
\begin{tikzpicture}
    \node[blue] at (\x2,\ys-0.6) {\scriptsize Keep};
    \node[red] at (\x2,\ya-0.2-0.6) {\scriptsize Eliminate};
    \path (\x1,\ys) node[circle,draw,inner sep=2pt,blue](1) {\scriptsize $\pi^\mathcal{S}_1$}
            (\x2,\ya-0.2) node[circle,draw,inner sep=2pt,red](2) {\scriptsize  $\pi^\mathcal{A}_2$}
            (\x3,\ys) node[circle,draw,inner sep=2pt,blue](3) {\scriptsize  $\pi^\mathcal{S}_3$};
    \draw[rounded corners=0.5cm, rectangle split parts=2,draw opacity=0] (\xc - \bRecW,0) rectangle (\xc + \bRecW,\bRecH) {};
\end{tikzpicture}
\subcaption{}
\end{subfigure}
\caption{a) Supervisor assigns reference policies. b) Agents decide on deceptive policies. c) Agents execute their policies in the environment. d) Supervisor eliminates a subset of agents based on observed behavior.}
\label{fig:ProcessOutline}
\end{figure}
We model each agent with a Markov decision process (MDP), and we define success for the team as any agent reaching the goal in their MDP.
In particular, certain states in each MDP represent the agents' shared reachability task. The agents'
deceptive policies then must satisfy the constraint that, with high probability, at least one agent reaches a target state. 
For example, in a surveillance task, only one agent must deviate to obtain footage of a secure location.

The agents need centralized synthesis to complete their shared task with high probability, but we limit the agents to decentralized policies to improve the tractability of synthesis and remove the need for communication. The use of decentralized policies is a shotgun approach. Each agent follows a policy independently from the other agents and achieves the task with a small probability, but collectively, the agents achieve the task with high probability.

We use Kullback-Leibler (KL) divergence, as often used in security settings \cite{bai2017data,kanellopoulos2021bounded,khazraei2022resiliency}, to measure deviations between agent behavior and the reference policy.
In stochastic environments, paths that achieve the agents' task may be feasible under the reference policy, but the likelihood of the paths informs the supervisor's belief about whether an agent is deceptive. 
The agents can make their paths plausible under the reference policy by ensuring KL divergence is small.

We study two versions of deceptive policy synthesis, which differ in how the agents 
avoid elimination and whether the agents use decoys.
We first formulate deceptive policy synthesis 
as ensuring all agents' deviations are small. In particular, we formulate \textit{\AgProbSimple} as minimizing the worst KL divergence among all agents.
We then explore settings in which, after the supervisor eliminates some agents, others may complete the task. 
By choosing policies carefully, one may allocate decoy agents, which the supervisor eliminates.
In \textit{\AgProbFull}, we formulate the supervisor's elimination procedure and explore the synthesis of policies such that when the supervisor eliminates decoy agents, the remaining agents succeed.

We give efficient algorithms for each synthesis problem.
The shotgun approach we take leads to a non-convex reachability constraint for these problems. However, we give an efficient method to find globally optimal solutions to \textit{\AgProbSimple} via a sequence of convex optimization problems. We also discuss how the supervisor may use this algorithm to increase system security by improving their reference policies.
We then explore restrictions to the supervisor's elimination procedure to make \textit{\AgProbFull} more tractable. We solve this problem by extending the algorithm provided for \textit{\AgProbSimple} to allocate decoy agents. 

\subsection{Related Work}
We discuss several areas relevant to our work, including team deception, KL divergence in security, deception of observers, and decentralized MDPs.

\textbf{Team Deception.}
Various disciplines study application-specific team deception.
Examples include the clustering of deceivers in online games \cite{yu2015} or the use of decoy agents to aid a leader in misdirecting a multi-robot team \cite{pettinati2021wolves}.
In contrast to application-specific approaches, we explore the synthesis of deceptive policies for a team of agents represented by MDPs.
Existing approaches for team deception include mean-field approaches \cite{chen2022deceptive} and reinforcement learning \cite{ghiya2020learning}. In contrast, we explore optimization-based approaches in a non-mean-field regime. Furthermore, these works focus on the problem of obscuring a task while we explore the concealment of the policy used to achieve a task.
Prior work also explores secure multi-agent planning \cite{he2024security,mu2023quantified,shi2023security,yu2022security}. 
These works represent security with opacity-like formulations, where an observer must not be able to determine that the agents have visited some state.
Finally, hidden role games are team games where agents are unaware of the team composition. 
Existing literature explores equilibrium computation \cite{carminati2023hidden} and learning-based approaches \cite{aitchison2021learning,serrino2019finding,strouse2018learning}.

\textbf{KL Divergence and Security.}
Most relevant to our setting is the synthesis of deceptive policies in MDPs using KL divergence \cite{karabag2021}.
Deceptive policy synthesis via KL divergence minimization admits a convex formulation with dimension polynomial in the MDP size. 
One may also formulate similar KL divergence minimization problems for partially observable agent dynamics \cite{karabag2022}, continuous dynamics \cite{patil2023}, and stochastic games \cite{karabag2024}.
Our work contrasts \cite{karabag2021,karabag2022,patil2023} with the addition of multiple supervised agents. One could use the formulation in \cite{karabag2021} to explore multi-agent settings, but the resulting implementation would be intractable for many agents and would require communication between agents.
The shotgun approach we use, with decentralized execution, is more tractable and does not need communication.
More generally, KL divergence appears in analyzing attack detection \cite{bai2017data,kanellopoulos2021bounded,khazraei2022resiliency}. 
For example, in the context of input replacement attacks in a linear system, KL divergence relates to an attack's stealthiness \cite{bai2017data}.

\textbf{Deception of Observers.}
Various works explore formulating policies to mask agents' intent from observers in single-agent settings. Quantitative deception literature includes approaches based on minimizing KL divergence, i.e. expected log-likelihood ratio, \cite{karabag2021,karabag2022,patil2023}, as well as
constraining the probability of the log-likelihood ratio exceeding a threshold \cite{ma2023covert}.
Meanwhile, in qualitative intention deception, an attacker ensures that observations generated by their behavior are consistent with observations generated by non-deceptive agents \cite{fu2022almost}. Again, in contrast to \cite{fu2022almost,ma2023covert}, we consider multiple observed agents.
Deceptive path planning also involves an agent masking their intent from an adversary by finding paths that delay an observer's recognition of the agent's goal \cite{fatemi2024deceptive,masters2017deceptive}. 
We consider a distinct problem from deceptive path planning, as in our setting, the agents obscure the decision-making process used to reach a state rather than the state itself.
Finally, the likelihood ratio between the paths produced by hidden Markov models defines the form of probabilistic opacity considered in \cite{keroglou2018probabilistic} for verification. This work is relevant to our work as we synthesize policies to control the log-likelihood ratio of observations produced by two Markov chains.

\textbf{Centralized Planning and Decentralized Execution.}
Our work relates to 
decentralized execution approaches common in multi-agent learning and planning.
For example, multi-agent reinforcement learning may use centralized learning with decentralized execution \cite{lowe2017multi}.
In planning, decentralized Markov decision processes (Dec-MDPs) are most relevant to our setting. Solving Dec-MDPs is NEXP-complete \cite{bernstein2002complexity} in general and NP-complete with independent transitions \cite{becker2004solving}. However, some classes of Dec-MDPs, such as Dec-MDPs with additive rewards and shared additive resource constraints, have efficient solution methods \cite{de2021constrained}. 
Additionally, heuristic methods provide good solutions for chance-constrained problems with additive rewards \cite{undurti2011decentralized}. We explore a chance-constrained problem where the reward has a maximum structure rather than an additive structure, and we show this maximum structure allows globally optimal policy synthesis.

\section{Preliminaries}
\label{sec:Preliminaries}
For $n$ objects $a_i$ indexed by $i=1,\ldots,n$, the collection is $(a_i)_{i=1}^n$. The set $[n]$ contains the natural numbers $1,\ldots,n$.
For probability distributions $P_1,P_2$ with a support $\mathcal{X}$, the KL divergence is
    $\textrm{KL}(P_1||P_2) = \sum_{x\in\mathcal{X}} P_1(x) \log \left( \nicefrac{P_1(x)}{P_2(x)} \right)$.

\textbf{Markov Decision Processes.}
A Markov decision process (MDP) $\mathcal{M}$ is a tuple $(S,A,P,s_0)$ where $S$ and $A$ are state and action spaces, $P$ is a transition function, and $s_0$ is an initial state. The set of actions available at state $s$ is $A(s)$, and
the probability of transitioning from state $s$ to $q$ with action $a$ is $P({s,a,q})$.
The set of successor states of $s$, $\textrm{Succ}(s)$, contains states $q$ such that there exists an action $a \in A(s)$ with $P({s,a,q}) > 0$.
For an absorbing state, $\textrm{Succ}(s) = \{s\}$. 

A stationary policy is a map $\pi: S \times A \rightarrow [0,1]$ such that $\sum_{a\in A(s)} \pi(s,a) = 1$ for all $s \in S$.
For an MDP $\mathcal{M}$, $\Pi(\mathcal{M})$ is the set of stationary policies on $\mathcal{M}$. The Cartesian product of these sets for $n$ MDPs is $\boldsymbol{\Pi}(\MDP{i}) = \Pi(\MDP{1}) \times \ldots \times \Pi(\MDP{n})$. 
Note that $\boldsymbol{\Pi}(\MDP{i})$ contains tuples of stationary policies, rather than policies on the joint state.
A path in an MDP with policy $\pi$ is a state-sequence $\xi = s_0s_1\ldots$ such that, for all $t$, $\sum_{a\in A(s_t)}P({s_t,a,s_{t+1}})\pi(s_t,a) > 0$. 
If each of $n$ MDPs runs for $\NumRounds$ rounds, the $j_{th}$ path of MDP $\MDP{i}$ is $\xi_{i,j}$. The sequence of paths from $\MDP{i}$ is $\boldsymbol{\xi}_i = (\xi_{i,j})_{j=1}^{\NumRounds}$.
A stationary policy $\pi$ induces a distribution $\Gamma^\pi$ on the paths of an MDP, and
the KL divergence between policies $\pi_1$ and $\pi_2$ is 
$\KL{\pi_1}{\pi_2}$.

For an MDP and stationary policy, the occupancy measure of state-action pair $(s,a)$ is $x_{s,a} = \sum_{t=0}^\infty \Pr(s_t=s|s_0)\pi(s,a)$, and is the expected number of visits to $(s,a)$.
By an abuse of notation, $x_{s,q} = \sum_{a\in A(s)}x_{s,a}P({s,a,q})$ is the occupancy flow from state $s$ to $q$. Similarly, $\smash{\pi_{s,q}=\sum_{a\in A(s)}P({s,a,q})\pi(s,a)}$ is the probability of transitioning from state $s$ to $q$ under policy $\pi$.

For a single agent,
$\Pr\left(s_0 \models \Diamond R \right)$ is the probability of reaching set $R$ with the agent's policy.
For a set $T$ of agents, with policies $\pi_i$,
$\Pr\left(\exists i \in T: s_0^i \models \Diamond R_i \right)$ is the probability that at least one agent reaches set $R_i$ in their MDP. We refer to this probability as the disjunctive reachability probability, and we remark that we may compute this probability from the agents' independent failure probabilities using $\Pr\left(\exists i \in T: s_0^i \models \Diamond R_i \right) = 1 - \prod_{i=1}^n \left(1 - \Pr\left(s_0^i \models \Diamond R_i \right)\right)$.

\section{Problem Formulation}
\label{sec:ProblemFormulation}

We first discuss the problem setting and then formulate two synthesis problems. 

For $i = 1, \ldots ,n$, an MDP $\MDP{i}$ governs agent $i$.
The supervisor assigns each agent a stationary policy $\Pol{S}{i}$.
The agents have a shared disjunctive reachability task, and they achieve this task if any agent reaches set $\ReachSet{A}{i}\subseteq S_i$ in $\MDP{i}$.
The agents choose policies $\Pol{A}{i}$ in a centralized manner such that $\Pr(\exists i\in [n]:  s_0^{i}\models \Diamond R^\mathcal{A}_i ) \geq \nu_\mathcal{A}$. 
We assume $s$ is absorbing for all $s\in \bigcup_{i=1}^{n}R^\mathcal{A}_i$.

\noindent
\begin{minipage}{.54\textwidth}
\noindent\textbf{Running Example.}
We give an aerial package delivery example for ease of exposition in Figure~\ref{fig:RunningExample}.
Two agents navigate the state space $ S = \{1,2,3,4,* \}$ with actions $ r$ (right) and $ d$ (down). However, due to weather, the agent may not go in the commanded direction. Additionally, weather may force a landing at state $*$ when taking action $r$ at state $2$.
A $\textrm{land}$ action is also available, which transitions the agent from state $2$ to state $*$ with probability $1$.
Agent $i$'s target is $R_i^\mathcal{A} = \{*\}$ for $i\in\{1,2\}$. 
\end{minipage}%
\hfill
\tikzmath{\ax = 1.9 ;
\x1 = 0; \x2 = \x1 + \ax; \x3 = \x2  + 1.3;
            \y1 = 1.2; \y2 = 0;}
\begin{minipage}{.42\textwidth}
\begin{tikzpicture}[shorten >=1pt,node distance=2cm,auto,initial text=,]
  \node[state,initial,inner sep=2pt, minimum size=16pt]  (q_1) at (\x1,\y1)                     {$1$};
  \node[state,inner sep=2pt, minimum size=16pt]          (q_2) at (\x2,\y1) {$2$};
  \node[state,inner sep=2pt, minimum size=16pt]          (q_*) at (\x2,\y2) {$*$};
  \node[state,inner sep=2pt, minimum size=16pt]          (q_3) at (\x3,\y1) {$3$};
  \node[state,inner sep=2pt, minimum size=16pt]          (q_4) at (\x1,\y2) {$4$};

  \path[->] (q_1) edge              node [below]       {\scriptsize $d,0.1$} (q_2)
                  edge [bend left]  node  {\scriptsize $r,0.9$}  (q_2)
                  edge              node         {\scriptsize $r,0.1$} (q_4)
                  edge [bend right]  node [left]  {\scriptsize $d,0.9$} (q_4)
            (q_2) edge              node        {\scriptsize $r,0.8$} (q_3)
                  edge              node        {\scriptsize $\textrm{land},1$} (q_*)
                  edge [bend right]  node [left]      {\scriptsize $r,0.2$} (q_*);
\end{tikzpicture}

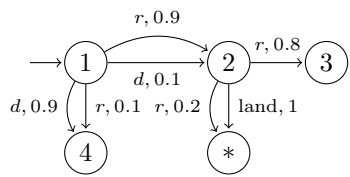
\captionof{figure}{An MDP for a package delivery example. Circles are states, and arrows are transitions. An arrow labeled $a,P({s,a,q})$, from state $s$ to $q$, represents an action.}
\label{fig:RunningExample}
\end{minipage}

\subsection{Worst-Case Deceptive Policy Synthesis}

To make the team deceptive, we ensure all agents have low deviation, and we quantify deviation with KL divergence using hypothesis testing arguments as in \cite{karabag2021}.
The log-likelihood test is the most powerful hypothesis test for a given significance level \cite{cover1999}, and we assume the supervisor eliminates an agent $i$ based on this test after observing their path in the state space. 
Although the paths are infinite, they terminate in some absorbing state, and we take a conservative approach by assuming the supervisor observes this whole path.
For a null hypothesis that agent $i$ is not deceptive, i.e. $\pi_i = \pi^\mathcal{S}_i$, and an alternative hypothesis that agent $i$ is deceptive, i.e. $\pi_i = \pi^\mathcal{A}_i$, the KL divergence between $\pi^\mathcal{A}_i$ and $\pi^\mathcal{S}_i$ gives the expected log-likelihood ratio. A policy with a lower KL divergence then corresponds to a less detectable agent and a smaller deviation.

We define the optimal deceptive policies as minimizing the maximum KL divergence, thus ensuring that all agents have low deviation. We formulate the agents' problem as finding optimal policies subject to reachability constraints.
\begin{problem}
    (\textit{\AgProbSimpleTitle})
    Given 
     MDP $\MDP{i}$, reference policy $\Pol{S}{i}$, set $R^\mathcal{A}_i$ for each agent $i\in[n]$, and threshold $\nu_\mathcal{A} \geq 0$, solve
    \begin{subequations}
        \begin{align}
             \underset{(\Pol{A}{1},\ldots,\Pol{A}{n})\in \boldsymbol{\Pi}(\MDP{i})}{\inf} \quad & \underset{i \in [n]}{\max} \ \KL{\Pol{A}{i}}{\Pol{S}{i}}\\
            \textrm{subject to} \ \quad \quad & \Pr(\exists i\in [n]:  s_0^{i}\models \Diamond R^\mathcal{A}_i ) \geq \nu_\mathcal{A}. \label{subeq:ReachConst}
        \end{align}
    \end{subequations}
    \label{prob:AgentSimple}
\end{problem}

The shotgun approach we use, with decentralized policies and centralized planning, is beneficial as it leads to a scalable solution and does not need inter-agent communication.
One may also consider both centralized policies and planning to make the deceptive team as a whole less detectable.
One achieves this approach by using existing deceptive planning methods \cite{karabag2021} on a joint MDP, with state and action space formed by products of individual state and action spaces.
However, this approach has two issues. 
First, the approach does not scale, as the joint MDP grows exponentially with $n$.
Also, centralized policies need communication during execution, which may be unavailable.
We avoid these issues by using decentralized execution and centralized planning, as commonly used in multi-agent reinforcement learning \cite{lowe2017multi}. Practically, a single party
who knows the deceptive agents' identities finds the policies and transmits them to the agents.

\noindent\textbf{Running Example (continued).} Consider reference policies given by $\Pol{S}{1}(1,r)$ $ = \Pol{S}{1}(2,r) =  \Pol{S}{2}(1,d) = \Pol{S}{2}(2,r) = 1.$
Under these policies, $\Pr(s_0^1 \models \Diamond \{*\} \vee s_0^2 \models \Diamond \{*\}) = 0.2$.
For $\nu_A = 0.5$, the agents must deviate to satisfy their specification. An example of a feasible deviation is $(\Pol{A}{1})(1,r) = (\Pol{A}{1})(2,\textrm{land}) = 1$, and $\Pol{A}{2} = \Pol{S}{2}$. The combined probability of success is then $0.9$. 

\subsection{Elimination-Aware Deceptive Policy Synthesis}
\label{sec:BeliefFormulation}

We explore the synthesis of deceptive policies such that, after a supervisor eliminates some agents, the remaining agents complete the task with high probability.
In some settings, the supervisor eliminates a subset of agents after an observation period, and the agents may want to ensure that the remaining agents still achieve the task with high probability.
Alternatively, an agent's success may be contingent on the supervisor not detecting that agent.
To explore these settings, we model the supervisor's procedure for eliminating agents, and we introduce a parameter to measure the supervisor's elimination budget. 
We then define the agents' problem as maximizing the budget for which the remaining agents complete the task with high probability. 

\subsubsection{A Supervisor Elimination Procedure.} 
For a prior $\DecPri{i} = \DecPriShort{i}$, the supervisor computes belief $\Belief{i} = \Pr(\isNotDec{i}|\boldsymbol{\xi}_i)$ using
\begin{equation}
    \label{eq:SupervisorBayesUpdate}
    \Belief{i} = 1 - \frac{\DecPriShort{i}}{\DecPriShort{i} + (1-\DecPriShort{i}) \left(\nicefrac{\NonDecCond{i}}{\DecCond{i}}\right)}.
\end{equation}
As $\Belief{i}$ increases, the supervisor is more sure agent $i$ is not deceptive.

We define the supervisor's procedure for eliminating agents as a subset selection problem with parameters based on $\Belief{i}$.
The expected utility of agent $i$ to the supervisor is $V_i\Belief{i}$, where $V_i$ is a base utility. 
We then define the supervisor's budget for eliminating agents by an upper bound, $C$, on the total utility the supervisor may eliminate. Under this constraint, the supervisor maximizes the probability that at least one of the agents eliminated is deceptive, $\Pr(\exists i \in T: \isDec{i}|\boldsymbol{\xi}_i)$. 
Assuming the supervisor treats the agents as independent, conditioned on paths, $\Pr(\exists i \in T: \isDec{i}|\boldsymbol{\xi}_i) = 1 - \prod_{i\in T} \Belief{i}$. Maximizing $\Pr(\exists i \in T: \isDec{i}|\boldsymbol{\xi}_i)$ is then equivalent to maximizing $\sum_{i\in T}-\log (\Belief{i})$, and the supervisor's elimination problem is
\begin{equation}
    \label{eq:SupervisorSelectionMath}
     \underset{T}{\max} \quad \quad \sum_{i\in T}-\log (\Belief{i}) \quad
     \textrm{subject to} \quad \sum_{i\in T} \Belief{i}V_i \leq C.
\end{equation}
We denote the solution of \eqref{eq:SupervisorSelectionMath} by $T = f(\Belief{i}, C)$. 
The supervisor computes $T$ once after observing $\NumRounds$ paths for the agents.

We remark that the independence assumptions affect the form of (3). 
The conditional independence assumption applies when the supervisor treats the agents' deceptiveness as independent, for example, when the deceptive agents, $1,\ldots,n$, are a subset of a larger agent pool that the supervisor observes. 

\subsubsection{Deceptive Policy Synthesis with Elimination.}
We define the agents' problem as synthesizing policies that maximize the value of $C$, such that the remaining agents, $[n] \setminus f(\Belief{i}, C)$, complete the task with high probability.

Policies alone do not determine the subset $T$, as elimination depends on the state sequence produced during execution. As such, we define a belief proxy
\begin{equation}
    \BeliefProxy{i}{\pi} = 1 - \frac{\DecPriShort{i}}{\DecPriShort{i} + (1-\DecPriShort{i})e^{-\NumRounds \cdot \KL{\pi}{\Pol{S}{i}}}}.
\end{equation}
Because of the relationship,
$$\NumRounds \cdot 
\KL{\pi}{\Pol{S}{}} = \mathbb{E} \left[ \log (\Pr (\boldsymbol{\xi}|\pi) ) - \log (\Pr (\boldsymbol{\xi}|\pi^\mathcal{S}))\right],$$
the belief proxy has the same form as the expectation $\mathbb{E}[\theta_i(\boldsymbol{\xi}_i)]$, with the expectation operator moved to the exponent in the denominator.

Using this proxy, we formulate the agents' problem as 
\begin{problem}
    \label{prob:DecOptimizationGeneral}
    (\textit{\AgProbFullTitle})
    Given MDP $\MDP{i}$, reference policy $\Pol{S}{i}$, set $R^\mathcal{A}_i$, prior $\DecPriShort{i}\in(0,1)$, and base utility $V_i \geq 0$ for each agent $i\in[n]$, as well as threshold $\nu_\mathcal{A} \geq 0$, and number of paths $\NumRounds \in \mathbb{N}$, solve
    \begin{subequations}
        \begin{align}
            \underset{(\Pol{A}{1},\ldots,\Pol{A}{n})\in\boldsymbol{\Pi}(\MDP{i}),C,T}{\sup} \quad \ & C\\
            \textrm{subject to} \quad \quad \quad \ & T =  f(\BeliefProxy{i}{\pi^\mathcal{A}_i}, C), \\
            & \Pr (\exists i\in [n]\setminus T:  s_0^{i}\models \Diamond R^\mathcal{A}_i ) \geq \nu_\mathcal{A}. \label{subeq:reachOptGen}
        \end{align}
    \end{subequations}
\end{problem} 
For $V_1=\ldots=V_n$, and fixed policies $\Pol{A}{i}$, the supremum of the set of feasible values of $C$ is the smallest amount of utility the supervisor must sacrifice such that the remaining agents no longer satisfy the reachability constraint.

\noindent\textbf{Running Example (continued).}  
Consider policies formulated for Problem~\ref{prob:AgentSimple} such that constraint (\ref{subeq:ReachConst}) is tight. If $\BeliefProxyTEXT{i}{\Pol{A}{i}} = y$, and $V_i = 1$, for all $i\in [n]$, then for $C = y$, these policies are no longer feasible, as when the supervisor eliminates either agent, the task is no longer achieved with high probability.
Alternatively, each agent may reach $*$ with probability $\nu_\mathcal{A}$ at the cost of decreasing $\BeliefProxyTEXT{i}{\Pol{A}{i}}$.
If $\BeliefProxyTEXT{i}{\Pol{A}{i}} = z$ for all $i
\in [n]$, then the policies are feasible for any $C < 2z$, as the supervisor must eliminate both agents to violate the reachability constraint.

\section{Worst-Case Deceptive Policy Synthesis}
\label{sec:AgentPolicyConstructionSimple}
We provide a scalable algorithm to solve Problem~\ref{prob:AgentSimple} to global optimality by solving a sequence of convex optimization problems for each agent individually.

We first convert Problem~\ref{prob:AgentSimple} into a formulation based on occupancy measures to facilitate the reachability constraint (\ref{subeq:ReachConst}). This conversion follows a similar process to \cite{karabag2021}, which we detail in Section~\ref{sec:occMeasureChecks} of the appendix.
The reference policy $\pi^\mathcal{S}_i$ induces a set of transient states, $\sd{i} \subseteq S_i$, on which $\Pol{A}{i}$ deviates from $\Pol{S}{i}$.
The states in $ S_i \setminus \sd{i} $ are closed, and the optimal deceptive policies do not deviate from $\pi^\mathcal{S}_i$ on $S_i \setminus \sd{i}$. We optimize occupancy measures for $s \in \sd{i}$, and the elements of vector $\mathbf{x}_i$ are occupancy measures $x_{s^i,a^i}$ for the states $s^i \in \sd{i}$.

We define the following functions to introduce the new formulation. 
\begin{equation}
    \label{eq:KLOccupancy}
     \textrm{KL}(\mathbf{x}_i, \pi^\mathcal{S}_i) = \sum_{s^i\in \sd{i}} \sum_{q^i\in \textrm{Succ}_i(s^i)} x_{s^i,q^i} \\\log \left( \frac{x_{s^i,q^i}}{\pi_{s^i,q^i}^{\mathcal{S}}\sum_{a^{i'}\in A_i(s^i)} x_{s^i,a^{i'}}} \right).
\end{equation}
\begin{equation}
\label{eq:OccupancyFlow}
    F(\mathbf{x}_i, s^i) = \sum_{a^i\in A_i(s^i)}x_{s^i,a^i} - \sum_{q^i\in \sd{i}} x_{q^i,s^i} - \bbbone_{s_0^i}(s^i).
\end{equation}
\begin{equation}
\label{eq:Reachability}
    \nu(\mathbf{x}_i, R_i) = \sum_{q^i\in R_i} \sum_{s^i\in \sd{i}} x_{s^i,q^i} + \bbbone_{s_0^i}(q^i).
\end{equation}
The KL divergence between path distributions is (\ref{eq:KLOccupancy}), and (\ref{eq:KLOccupancy}) holds due to the stationarity of the policies \cite{karabag2021}.
The net occupancy flow at state $s^i$ is (\ref{eq:OccupancyFlow}), and the reachability probability for a set $R_i$ is (\ref{eq:Reachability}).

We reformulate Problem~\ref{prob:AgentSimple} as the following optimization problem, where the decision variables are the agents' individual occupancy measures.
\begin{subequations}
    \label{eq:OccupancyProblemInfMax}
    \begin{align}
         \underset{(\mathbf{x}_1^\mathcal{A}, \ldots ,\mathbf{x}_n^\mathcal{A})}{\inf} \quad \quad & \underset{i\in [n]}{\max} \ \textrm{KL}(\mathbf{x}_i^\mathcal{A}, \Pol{S}{i})\label{subeq:FiniteMax}\\
         \textrm{subject to} \quad 
         & F(\mathbf{x}_i^\mathcal{A}, s) = 0, \quad \forall s \in \sd{i},  \forall i\in[n], \label{subeq:FiniteOccupancy1}\\ 
         & \prod_{i=1}^{n} (1 - \nu(\mathbf{x}_i^\mathcal{A}, R^\mathcal{A}_i)) \leq 1 - \nu_\mathcal{A},  \label{eq:DisjunctiveReachability}
         \\
         & \mathbf{x}_i^\mathcal{A} \geq 0, \quad \forall i\in[n] \label{subeq:FiniteOccupancy2}.
    \end{align}
\end{subequations}
Proposition~\ref{prop:OptimizationWellPosedness} shows the 
existence of a solution and the equivalence of (\ref{eq:OccupancyProblemInfMax}) to Problem~\ref{prob:AgentSimple}. We give the proof in Section~\ref{sec:occMeasureChecks} of the appendix.
\begin{proposition}
    \label{prop:OptimizationWellPosedness}
    Problem~\ref{prob:AgentSimple} and the optimization problem (\ref{eq:OccupancyProblemInfMax}) share the same optimal value, and there exist policies $(\Pol{A}{i})_{i=1}^n$ that attain the optimal value.
\end{proposition}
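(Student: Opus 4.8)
The plan is to establish two things: (1) that the reformulation (\ref{eq:OccupancyProblemInfMax}) is equivalent to Problem~\ref{prob:AgentSimple}, in the sense that there is a value-preserving correspondence between feasible points, and (2) that the infimum in (\ref{eq:OccupancyProblemInfMax}) is in fact attained. Both parts proceed essentially agent-by-agent, exploiting that the only coupling between agents is the single reachability constraint (\ref{eq:DisjunctiveReachability}) and that the objective is a max over per-agent terms.

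For the equivalence, I would follow the standard occupancy-measure dictionary for reachability in MDPs, as in \cite{karabag2021}. First I would argue that we may restrict attention to policies that agree with $\Pol{S}{i}$ outside the transient set $\sd{i}$: since the states in $S_i\setminus\sd{i}$ are closed under $\Pol{S}{i}$ and the target states are absorbing, any deviation on $S_i\setminus\sd{i}$ either leaves the reachability probability unchanged (if those states cannot reach $R^\mathcal{A}_i$ or already reach it with probability one) or strictly increases the KL divergence without helping — so an optimal policy can be taken to deviate only on $\sd{i}$. Given such a policy $\Pol{A}{i}$, the state-action occupancy measures $x_{s^i,a^i}$ on $\sd{i}$ are finite (because $\sd{i}$ is transient) and satisfy the flow conservation equations, which is exactly (\ref{subeq:FiniteOccupancy1}) written via (\ref{eq:OccupancyFlow}), together with nonnegativity (\ref{subeq:FiniteOccupancy2}). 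Conversely, given any nonnegative $\mathbf{x}_i$ satisfying the flow equations, one recovers a stationary policy on $\sd{i}$ by normalization, $\pi_i(s^i,a^i) = x_{s^i,a^i}/\sum_{a^{i'}}x_{s^i,a^{i'}}$ (handling states with zero occupancy by falling back to $\Pol{S}{i}$), and this policy induces exactly the measure $\mathbf{x}_i$. Under this bijection, the reachability probability $\Pr(s_0^i\models\Diamond R^\mathcal{A}_i)$ equals $\nu(\mathbf{x}_i,R^\mathcal{A}_i)$ by (\ref{eq:Reachability}), so the disjunctive constraint $\Pr(\exists i: s_0^i\models\Diamond R^\mathcal{A}_i)\ge\nu_\mathcal{A}$ becomes $1-\prod_i(1-\nu(\mathbf{x}_i,R^\mathcal{A}_i))\ge\nu_\mathcal{A}$, i.e.\ (\ref{eq:DisjunctiveReachability}), using the independence identity for disjunctive reachability from Section~\ref{sec:Preliminaries}. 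Finally, the KL identity (\ref{eq:KLOccupancy}) — valid by stationarity, as cited from \cite{karabag2021} — shows $\KL{\Pol{A}{i}}{\Pol{S}{i}} = \textrm{KL}(\mathbf{x}_i,\Pol{S}{i})$, so objectives match term by term and hence the $\max$ over $i$ matches. This gives equality of optimal values.

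For attainment, I would exhibit a compact feasible set on which the (lower-semicontinuous) objective achieves its infimum. The subtlety is that the occupancy polyhedron $\{\mathbf{x}_i\ge 0: F(\mathbf{x}_i,s)=0\ \forall s\in\sd{i}\}$ is generally unbounded. However, I would argue that adding a bound of the form $\sum_{s^i,a^i}x_{s^i,a^i}\le B$ for a suitably large constant $B$ does not change the optimal value: increasing occupancy on a transient state that is not needed to reach $R^\mathcal{A}_i$ only increases $\textrm{KL}(\mathbf{x}_i,\Pol{S}{i})$ (the function behaves like a sum of $t\log(t/c)$ terms along flows, which eventually grows), so any minimizing sequence can be taken to lie in a bounded region — more carefully, one takes a minimizing sequence, notes the KL objective is coercive along directions that increase total flow through transient states, and concludes the sequence is bounded. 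On the resulting compact set, $\textrm{KL}(\mathbf{x}_i,\cdot)$ is continuous (with the usual convention $0\log 0=0$ making it continuous up to the boundary), the feasible region defined by the continuous constraint (\ref{eq:DisjunctiveReachability}) is closed, and one must check it is nonempty — which holds precisely when $\nu_\mathcal{A}$ is achievable, and otherwise the problem is vacuous — so a minimizer $(\mathbf{x}_1^\mathcal{A},\ldots,\mathbf{x}_n^\mathcal{A})$ exists. Mapping it back through the bijection above yields policies $(\Pol{A}{i})_{i=1}^n$ attaining the optimal value of Problem~\ref{prob:AgentSimple}.

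I expect the main obstacle to be the attainment argument, specifically justifying the restriction to a bounded occupancy region: one has to rule out minimizing sequences that "escape to infinity" in the unbounded polyhedron, which requires a clean coercivity statement for the KL objective along recession directions of the flow polytope. The equivalence part is essentially bookkeeping once the transient/closed decomposition is set up, and I would lean on \cite{karabag2021} for the stationarity-based KL identity rather than reproving it. A secondary care point is the handling of states with zero occupancy measure when reconstructing a policy, and checking that the reconstructed policy genuinely induces the prescribed occupancy measure on the transient set (this is where transience of $\sd{i}$ is essential, guaranteeing finiteness and the validity of the flow equations).
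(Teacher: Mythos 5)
Your overall route is the same as the paper's: reduce to the single-agent occupancy-measure dictionary of \cite{karabag2021} agent by agent, observe that the only coupling across agents is the product form of the disjunctive reachability constraint, and note that the objective is a maximum of per-agent KL terms. The paper's own proof is essentially a citation to \cite{karabag2021} plus two multi-agent extensions, so most of your bookkeeping re-derives material the paper takes as given.

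There is one genuine gap. You justify finiteness of the occupancy measures $x_{s^i,a^i}$ on $\sd{i}$ by saying ``because $\sd{i}$ is transient.'' But $\sd{i}$ is transient under the \emph{reference} policy $\Pol{S}{i}$; under a deviating policy $\Pol{A}{i}$ these states need not be transient, and there are feasible points of Problem~\ref{prob:AgentSimple} (e.g.\ agent $i$ cycles forever inside $\sd{i}$ while the other agents carry the disjunctive constraint) for which the occupancy measure is infinite and the policy-to-measure correspondence breaks down. This is precisely the step the paper isolates: it extends Proposition~\ref{prop:Karabag1} to the team setting by fixing each agent's achieved reachability level $\nu_i$ at an optimum, invoking the single-agent result to replace $\Pol{A}{i}$ by a single-agent KL-minimizer with finite occupancy measure and no larger KL divergence, and only then passing to occupancy variables. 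Your argument needs this reduction (or an explicit ``infinite occupancy on $\sd{i}$ implies infinite KL, hence irrelevant when the optimal value is finite'') inserted before the dictionary can be applied. On attainment, the paper again defers to \cite{karabag2021}, adding only that $\max_{i}\textrm{KL}(\mathbf{x}_i,\Pol{S}{i})$ remains continuous; your coercivity sketch is a plausible self-contained alternative (the KL objective is positively homogeneous of degree one in the occupancy variables, and every nonzero recession direction of the flow polytope is a circulation that cannot follow $\Pol{S}{i}$ on a set transient under $\Pol{S}{i}$, hence has strictly positive KL), but, as you acknowledge, it is left as a sketch rather than carried out.
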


We remark that (\ref{eq:DisjunctiveReachability}) is problematic as it induces non-convexity, and non-convex optimization problems may have sub-optimal local minima. 
However, Theorem~\ref{the:locGlobMin} shows that non-convexity is not an issue for Problem~\ref{prob:AgentSimple}.
\begin{theorem}
    \label{the:locGlobMin}
    Every local minimum of (\ref{eq:OccupancyProblemInfMax}) is a global minimum.
\end{theorem}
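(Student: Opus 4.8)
The plan is to exploit the special structure of the problem: the feasible region is "almost" convex—each flow constraint (\ref{subeq:FiniteOccupancy1}) and the nonnegativity constraint (\ref{subeq:FiniteOccupancy2}) are convex, and the only source of non-convexity is the disjunctive reachability constraint (\ref{eq:DisjunctiveReachability}). The key observation I would exploit is that although the \emph{set} $\{(\mathbf{x}_i) : \prod_i (1-\nu(\mathbf{x}_i^\mathcal{A},R_i^\mathcal{A})) \le 1-\nu_\mathcal{A}\}$ is not convex, it is "convex enough" along the directions that matter: each factor $1-\nu(\mathbf{x}_i^\mathcal{A},R_i^\mathcal{A})$ is affine (hence both convex and concave) in $\mathbf{x}_i$, lies in $[0,1]$ on the feasible set, and the product of such factors is multi-affine. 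The objective $\max_i \textrm{KL}(\mathbf{x}_i^\mathcal{A},\pi_i^\mathcal{S})$ is convex (each $\textrm{KL}(\cdot,\pi_i^\mathcal{S})$ is jointly convex in the occupancy variables, and pointwise maxima of convex functions are convex).

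The concrete strategy: suppose $\mathbf{x}^\star = (\mathbf{x}_1^\star,\ldots,\mathbf{x}_n^\star)$ is a local minimum with value $v^\star$, and suppose for contradiction there is a feasible $\mathbf{y} = (\mathbf{y}_1,\ldots,\mathbf{y}_n)$ with objective value $v < v^\star$. I would like to connect $\mathbf{x}^\star$ to $\mathbf{y}$ by a path inside the feasible region along which the objective never exceeds $\max(v^\star,v) = v^\star$, and in fact is strictly below $v^\star$ near $\mathbf{x}^\star$—contradicting local minimality. The straight-line segment $\mathbf{x}(t) = (1-t)\mathbf{x}^\star + t\mathbf{y}$ keeps the flow and nonnegativity constraints satisfied (these are convex), and by convexity of the objective, the objective along the segment is at most $(1-t)v^\star + tv < v^\star$ for $t \in (0,1]$. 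The only thing to check is that the segment stays inside the reachability constraint. Here I would use the multi-affine structure: define $g(\mathbf{x}) = \prod_{i=1}^n (1-\nu(\mathbf{x}_i^\mathcal{A},R_i^\mathcal{A}))$. Since each $1-\nu(\mathbf{x}_i^\mathcal{A},R_i^\mathcal{A})$ is affine in $\mathbf{x}_i$ and nonnegative on the relevant region, and these factors depend on disjoint blocks of variables, the restriction $t \mapsto g(\mathbf{x}(t))$ is a polynomial in $t$ which can be analyzed—but it is \emph{not} in general convex or monotone, so the naive segment argument may fail.

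The main obstacle, and where the real work lies, is precisely handling constraint (\ref{eq:DisjunctiveReachability}) along the connecting path. I anticipate the fix is not to use a straight line but to route through a distinguished point: first move mass so as to \emph{increase} each agent's reachability probability (which only helps the constraint, since $g$ is decreasing in each $\nu(\mathbf{x}_i^\mathcal{A},R_i^\mathcal{A})$) while keeping KL divergences controlled, or to argue that one can always scale a single agent's deviation to repair the constraint cheaply. Concretely, I would likely argue as follows: given a strictly better $\mathbf{y}$, perturb $\mathbf{x}^\star$ a tiny bit toward $\mathbf{y}$ (lowering the objective by convexity); this perturbation may slightly violate (\ref{eq:DisjunctiveReachability}), but because in the optimal/local structure there is at least one agent whose occupancy measure can be nudged to increase $\nu(\mathbf{x}_i^\mathcal{A},R_i^\mathcal{A})$ with only a second-order effect on that agent's KL divergence (exploiting that KL has zero gradient in the direction of increasing reachability when evaluated appropriately, or using the explicit form of (\ref{eq:KLOccupancy})), one restores feasibility while the objective is still strictly below $v^\star$. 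I would therefore structure the proof as: (i) establish convexity of the objective and of the flow/nonnegativity constraints; (ii) establish the affine (multi-linear) structure of $g$ and that it is coordinatewise monotone on the feasible region; (iii) given a putative strictly-better feasible point, construct an explicit feasible perturbation of the local minimizer with strictly smaller objective, deriving the contradiction. Step (iii)—the feasibility repair for the product constraint—is the crux and likely needs a short lemma that increasing reachability along a carefully chosen direction costs only $O(\epsilon^2)$ in KL while the objective gain from moving toward $\mathbf{y}$ is $\Theta(\epsilon)$.
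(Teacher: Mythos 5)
Your skeleton matches the paper's: the objective is a pointwise maximum of convex functions, the flow and nonnegativity constraints are convex, the only non-convexity is the product constraint (\ref{eq:DisjunctiveReachability}), and the argument splits on whether that constraint is tight (straight-line interpolation toward a strictly better point works when there is slack; a repair step is needed when it is tight). However, the crux you defer to a "short lemma" --- that one can increase some agent's reachability at only $O(\epsilon^2)$ cost in KL while gaining $\Theta(\epsilon)$ in the objective --- is a genuine gap: there is no reason the KL divergence should have zero directional derivative along reachability-increasing directions at a generic local minimizer, so in general increasing $\nu(\mathbf{x}_i^\mathcal{A}, R_i^\mathcal{A})$ has a first-order KL cost and your accounting fails. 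You do not need this lemma, and the paper does not use one.

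The repair that actually works uses the strictly better feasible point $\mathbf{y}$ itself, not an abstract reachability-increasing direction. When (\ref{eq:DisjunctiveReachability}) is tight at $\mathbf{x}^\star$, feasibility of $\mathbf{y}$ gives $\prod_i(1-\nu_i(\mathbf{x}^\star)) \geq \prod_i(1-\nu_i(\mathbf{y}))$; either $\nu_i(\mathbf{x}^\star) = \nu_i(\mathbf{y})$ for all $i$ (in which case the straight line preserves the product exactly, since $\nu$ is affine in each block, and the convexity argument applies directly), or some agent $j$ has $\nu_j(\mathbf{x}^\star) < \nu_j(\mathbf{y})$. In the latter case, move \emph{only} block $j$ slightly toward $\mathbf{y}_j$: by linearity of $\nu$ in $\mathbf{x}_j$ and coordinatewise monotonicity of the product, this creates strict slack in (\ref{eq:DisjunctiveReachability}); and it does not raise the objective because $\textrm{KL}(\mathbf{y}_j,\pi_j^\mathcal{S}) \leq \max_i \textrm{KL}(\mathbf{y}_i,\pi_i^\mathcal{S}) < \max_i \textrm{KL}(\mathbf{x}_i^\star,\pi_i^\mathcal{S})$, so the convex combination in block $j$ stays below the current max. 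With slack established at a point arbitrarily close to $\mathbf{x}^\star$, your Case-with-slack argument then produces a nearby feasible point with strictly smaller objective, contradicting local minimality. Replacing your step (iii) with this two-stage move closes the gap.
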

Theorem~\ref{the:locGlobMin} holds as (\ref{subeq:FiniteMax}) is the maximum of a finite set of convex functions, and the disjunctive reachability probability is a coordinate-wise monotone function. In fact, Theorem~\ref{the:locGlobMin} and the following algorithm hold for any objective that is a maximum of convex functions of individual occupancy measures. For example, one may instead minimize the maximum required battery capacity for a fleet of drones. We prove Theorem~\ref{the:locGlobMin} in Section~\ref{sec:OptimizationProofs} of the appendix.

Algorithm~\ref{alg:DeceptiveLineSearch} uses the maximum structure of (\ref{subeq:FiniteMax}) to solve (\ref{eq:OccupancyProblemInfMax}) via a series of reachability maximization problems.
The reachability maximization problem for agent $i$, given KL divergence bound $K$, reference policy $\Pol{S}{i}$, and set $R^\mathcal{A}_i$, is
\begin{subequations}
    \label{eq:SingleAgentSubProblem}
    \begin{align}
         \underset{\mathbf{x}^\mathcal{A}_i}{\sup} \quad \quad & \nu(\mathbf{x}^\mathcal{A}_i, R^\mathcal{A}_i) \\
         \textrm{subject to} \quad & \textrm{KL}(\mathbf{x}^\mathcal{A}_i, \Pol{S}{i}) \leq K, \label{subeq:ReachKLConstraint}\\
         & F(\mathbf{x}^\mathcal{A}_i, s^i) = 0, \quad \forall s^i \in \sd{i},  \\ 
         & \mathbf{x}^\mathcal{A}_i \geq 0 .
    \end{align}
\end{subequations}
We denote by $\textproc{Reach}(\Pol{S}{i}, R^\mathcal{A}_i, K)$, the optimal value of (\ref{eq:SingleAgentSubProblem}), and we note that (\ref{eq:SingleAgentSubProblem}) is a convex optimization problem due to results in \cite{karabag2021}. 
Algorithm~\ref{alg:DeceptiveLineSearch} finds the minimum $K$ such that the disjunctive reachability probability exceeds $\nu_\mathcal{A}$.
\begin{algorithm}
\caption{Line search applied for deceptive policy synthesis}
\label{alg:DeceptiveLineSearch}
\begin{algorithmic}[1]
\Procedure{DeceptiveSynthesis}{$(\Pol{S}{i},R^\mathcal{A}_i)_{i=1}^n ,\nu_A,K_{\textrm{max}},\varepsilon$}
    \State $\overline{K} \gets$ \Call{Bisection}{\textproc{ReachEvaluate}($(\Pol{S}{i},R^\mathcal{A}_i)_{i=1}^n,\nu_\mathcal{A},\cdot$),$[0,K_\textrm{max}]$,$\varepsilon$}
\EndProcedure
\Procedure{ReachEvaluate}{$(\Pol{S}{i},R^\mathcal{A}_i)_{i=1}^n,\nu_\mathcal{A},K$}
    \State $\nu \gets 1 - \prod_{i=1}^n (1 - \textproc{Reach}(\Pol{S}{i}, R^\mathcal{A}_i, K))$ \label{algLine:ProductAlt}
    \State \Return $\nu - \nu_\mathcal{A}$
\EndProcedure
\end{algorithmic}
\end{algorithm}

Algorithm~\ref{alg:DeceptiveLineSearch} finds the smallest zero crossing of $\textproc{ReachEvaluate}$.
$\textproc{Bisection}$ successively computes intervals $[\underline{K},\overline{K}]$ containing $K^*$, which is the smallest $K$ such that 
$\textproc{ReachEvaluate}((\Pol{S}{i},R^\mathcal{A}_i)_{i=1}^n,\nu_\mathcal{A},K) \geq 0$.
We can then use the final value of $\overline{K}$ with $\textproc{ReachEvaluate}$ to compute feasible policies. 
Note that $\textproc{Bisection}$ should not terminate when $\textproc{ReachEvaluate}$ is zero. Rather, we should continue decreasing $\overline{K}$ to ensure we find $K^*$.

Algorithm~\ref{alg:DeceptiveLineSearch} requires $\mathcal{O}(\log_2(K_{\max}/\varepsilon))$ iterations each of which requires solving $\mathcal{O}(n)$ single-agent problems.

We next describe how to find $K_{\max}$, which upper-bounds the optimal value of (\ref{eq:OccupancyProblemInfMax}).
For each $i$, construct a new MDP, $\hat{\MDP{i}}$, by removing actions that induce state transitions 
$(s^i,q^i)$ with zero probability under $\Pol{S}{i}$. 
For each $\hat{\MDP{i}}$, find policies $\Pol{A}{i}$ that maximize $\nu_i = \Pr(s_0^i \models \Diamond R^\mathcal{A}_i)$. If $1 - \prod_{i=1}^n (1 - \nu_i) \geq \nu_\mathcal{A}$, then the maximum KL divergence among the policies bounds $K^*$. 
If the inequality does not hold, the agents 
must use state transitions with zero probability under $\Pol{S}{i}$.

As $\textproc{ReachEvaluate}$ is monotonic in $K$, 
Algorithm~\ref{alg:DeceptiveLineSearch} converges to the optimal value of (\ref{eq:OccupancyProblemInfMax}) for finite $K_{\max}$. We give a proof in Section~\ref{sec:OptimizationProofs} of the appendix.
\begin{theorem}
    \label{the:LineSearchConvergenceBasic}
    The value $\overline{K}$ computed by Algorithm~\ref{alg:DeceptiveLineSearch}  satisfies $\overline{K} < K^* + \varepsilon$, where $K^*$ is the optimal solution of Problem~\ref{prob:AgentSimple}.
\end{theorem}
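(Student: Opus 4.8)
The plan is to work with the scalar function $g(K) := \textproc{ReachEvaluate}((\Pol{S}{i},R^\mathcal{A}_i)_{i=1}^n,\nu_\mathcal{A},K) = \big(1-\prod_{i=1}^n(1-\nu_i(K))\big)-\nu_\mathcal{A}$, where $\nu_i(K):=\textproc{Reach}(\Pol{S}{i},R^\mathcal{A}_i,K)$, and with $K^\star := \inf\{K\ge 0 : g(K)\ge 0\}$. I would prove three things and then assemble them: (i) $g$ is non-decreasing on $[0,K_{\max}]$; (ii) $K^\star$ equals the optimal value of the occupancy program (\ref{eq:OccupancyProblemInfMax}), hence, by Proposition~\ref{prop:OptimizationWellPosedness}, the optimal value of Problem~\ref{prob:AgentSimple}; and (iii) $\textproc{Bisection}$ maintains the invariant $\underline{K}\le K^\star\le \overline{K}$ and terminates with $\overline{K}-\underline{K}<\varepsilon$, so that $\overline{K}<K^\star+\varepsilon$.

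Step (i) follows once I observe that only constraint (\ref{subeq:ReachKLConstraint}) of the single-agent program (\ref{eq:SingleAgentSubProblem}) depends on $K$ and is relaxed as $K$ grows, so each $\nu_i(K)$ is non-decreasing; since $\nu_i(K)\in[0,1]$, each $1-\nu_i(K)$ is non-negative and non-increasing, the product $\prod_i(1-\nu_i(K))$ is non-increasing, and $g$ is non-decreasing — this is the coordinate-wise monotonicity of the disjunctive reachability probability already used for Theorem~\ref{the:locGlobMin}. For step (ii), let $K^{\mathrm{opt}}$ be the optimal value of (\ref{eq:OccupancyProblemInfMax}). Given any $K$ with $g(K)\ge 0$, I would take $\mathbf{x}_i^\star$ attaining the supremum in the convex program (\ref{eq:SingleAgentSubProblem}); then $\prod_i(1-\nu(\mathbf{x}_i^\star,R^\mathcal{A}_i))=\prod_i(1-\nu_i(K))\le 1-\nu_\mathcal{A}$, so $(\mathbf{x}_i^\star)_{i=1}^n$ is feasible for (\ref{eq:OccupancyProblemInfMax}) with objective at most $K$, giving $K^{\mathrm{opt}}\le K$; taking the infimum over such $K$ yields $K^{\mathrm{opt}}\le K^\star$. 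Conversely, let $(\mathbf{x}_i^{\mathrm{opt}})_{i=1}^n$ attain the optimum of (\ref{eq:OccupancyProblemInfMax}) (which exists by Proposition~\ref{prop:OptimizationWellPosedness}); each $\mathbf{x}_i^{\mathrm{opt}}$ has $\textrm{KL}(\mathbf{x}_i^{\mathrm{opt}},\Pol{S}{i})\le K^{\mathrm{opt}}$ and satisfies the flow and non-negativity constraints, hence is feasible for (\ref{eq:SingleAgentSubProblem}) at $K=K^{\mathrm{opt}}$, so $\nu_i(K^{\mathrm{opt}})\ge \nu(\mathbf{x}_i^{\mathrm{opt}},R^\mathcal{A}_i)$; multiplying $0\le 1-\nu_i(K^{\mathrm{opt}})\le 1-\nu(\mathbf{x}_i^{\mathrm{opt}},R^\mathcal{A}_i)$ over $i$ and using (\ref{eq:DisjunctiveReachability}) gives $\prod_i(1-\nu_i(K^{\mathrm{opt}}))\le 1-\nu_\mathcal{A}$, i.e. $g(K^{\mathrm{opt}})\ge 0$, so $K^{\mathrm{opt}}\ge K^\star$. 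Thus $K^\star=K^{\mathrm{opt}}$.

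For step (iii), $K^\star\ge 0$ since KL divergence is non-negative, and $K^\star\le K_{\max}$ since $K_{\max}$ is, by the construction described before the theorem, an upper bound on the optimal value of (\ref{eq:OccupancyProblemInfMax}); hence the initial interval $[0,K_{\max}]$ brackets $K^\star$. At a step with midpoint $K_{\mathrm{mid}}$: if $g(K_{\mathrm{mid}})\ge 0$ then $K_{\mathrm{mid}}\ge K^\star$ by definition of $K^\star$, so the update $\overline{K}\gets K_{\mathrm{mid}}$ preserves $\overline{K}\ge K^\star$; if $g(K_{\mathrm{mid}})<0$ then monotonicity of $g$ forces $K_{\mathrm{mid}}\le K^\star$ (otherwise some $K'\in(K^\star,K_{\mathrm{mid}})$ would have $g(K')\ge 0$ yet $g(K')\le g(K_{\mathrm{mid}})<0$), so the update $\underline{K}\gets K_{\mathrm{mid}}$ preserves $\underline{K}\le K^\star$. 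At termination $\overline{K}-\underline{K}<\varepsilon$, so $\overline{K}=\underline{K}+(\overline{K}-\underline{K})\le K^\star+(\overline{K}-\underline{K})<K^\star+\varepsilon$; combined with step (ii) and Proposition~\ref{prop:OptimizationWellPosedness} this is the claimed bound. The degenerate case $g(0)\ge 0$, in which the reference policies already satisfy (\ref{subeq:ReachConst}) and $K^\star=0$, is handled separately and trivially.

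I expect step (ii) to be the main obstacle: reducing the joint min-max occupancy program (\ref{eq:OccupancyProblemInfMax}) to the per-agent subproblems (\ref{eq:SingleAgentSubProblem}) relies on the attainment of the supremum in (\ref{eq:SingleAgentSubProblem}) — so that a $K$ with $g(K)\ge 0$ yields occupancy measures with objective exactly $\le K$ rather than $\le K+o(1)$ — on the attainment guarantee of Proposition~\ref{prop:OptimizationWellPosedness}, and on the fact that the $\nu_i$ are genuine probabilities in $[0,1]$, so the product inequalities compose in the correct direction. Steps (i) and (iii) are routine monotonicity and bisection-invariant bookkeeping.
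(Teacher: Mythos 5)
Your proposal is correct and follows essentially the same route as the paper's proof: monotonicity of the per-agent \textproc{Reach} values in $K$ (each larger $K$ being a relaxation), hence monotonicity of the aggregate function, combined with the standard bisection bracketing invariant to conclude $K^* \leq \overline{K}_t \leq \underline{K}_t + \varepsilon \leq K^* + \varepsilon$. Your step (ii), identifying the smallest zero crossing of $\textproc{ReachEvaluate}$ with the optimal value of (\ref{eq:OccupancyProblemInfMax}) via feasibility/attainment in both directions, is left implicit in the paper's proof, so your version is a slightly more complete write-up of the same argument.
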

\subsection{On Reference Policy Synthesis}

The supervisor may preempt deceptive policy synthesis by choosing reference policies $\Pol{S}{i}$ that maximize the optimal value of Problem~\ref{prob:AgentSimple}. Denoting Problem~\ref{prob:AgentSimple}'s optimal value by 
$g((\pi_i^\mathcal{S})_{i=1}^n)$
, the supervisor's problem is
\begin{subequations}
    \label{eq:SupProbShort}
    \begin{align}                                       
    \underset{(\pi_1^\mathcal{S},\ldots,\pi_n^\mathcal{S}) \in \boldsymbol{\Pi}(\MDP{i})}{\sup} \quad & g((\pi_i^\mathcal{S})_{i=1}^n) \\
         \textrm{subject to} \quad \quad & \Pr(s_0^{i}\models \Diamond R^\mathcal{S}_i ) \geq \nu_{\mathcal{S},i},\quad \forall i \in [n]. \label{subeq:SupTask}
    \end{align}
\end{subequations}
In (\ref{subeq:SupTask}), $\ReachSet{S}{i}\subseteq S_i$ is a task for each agent, and $\nu_{\mathcal{S},i}$ is a probability threshold.

While solving (\ref{eq:SupProbShort}) in the single-agent case is NP-hard \cite{karabag2021}, \cite{karabag2023decision} uses projected gradient-descent as a heuristic.
Extending this approach to multiple agents is non-trivial.
The disjunctive reachability constraint underlying $g$ is non-convex, and this non-convexity makes taking projections in the agents' variables difficult. 

However, Algorithm~\ref{alg:DeceptiveLineSearch} facilitates first-order methods for (\ref{eq:SupProbShort}). The gradient descent with max-oracle algorithm \cite{lin2019} may solve a $\max-\min$ problem by taking gradient steps in the outer variables and solving the inner problem at each iteration.
In (\ref{eq:SupProbShort}), the inner problem is deceptive policy synthesis, which we solve with Algorithm~\ref{alg:DeceptiveLineSearch}. Note that we must smooth $\max_i\textrm{KL}(\mathbf{x}_i^\mathcal{A}, \Pol{S}{i})$ for differentiability.

\section{Elimination-Aware Deceptive Policy Synthesis}
\label{sec:DeceptiveWithElimination}

We explore the synthesis of deceptive policies that ensure agents complete the reachability task, even when the supervisor eliminates some agents.
We discuss the complexity of the supervisor's subset selection problem, as it appears in this synthesis problem, and we give restrictions that ease policy synthesis.
We then extend the methods for \textit{\AgProbSimple}, to solve Problem~\ref{prob:DecOptimizationGeneral}.

\subsection{Discussion of Supervisor's Elimination Procedure}
The supervisor's elimination procedure appears to lack sufficient structure to facilitate an efficient algorithm for deceptive policy synthesis.
A tuple of weights, $(w_i)_{i=1}^n$, and profits, $(p_i)_{i=1}^n$, define a knapsack problem
\begin{subequations}
    \label{eq:KnapsackGeneral}
    \begin{align}
     \underset{T\subseteq[n]}{\max} \quad \quad \sum_{i\in T}p_i \quad 
     \textrm{subject to} \quad \sum_{i\in T} w_i \leq C,
\end{align}
\end{subequations}
and the elimination procedure, (\ref{eq:SupervisorSelectionMath}), is an example with $p_i = -\log(\Belief{i})$, and $w_i = \Belief{i}V_i$.
Proposition~\ref{prop:knapsack} indicates that 
the supervisor's elimination procedure is too general to permit an efficient algorithm for deceptive policy synthesis, as instances of the elimination procedure cover real-valued knapsack problems.
\begin{proposition}
    \label{prop:knapsack}
    Let $(w_i)_{i=1}^n$ and $(p_i)_{i=1}^n$ be given arbitrary tuples of positive real weights and profits defining a knapsack problem. Then, there exists a tuple, $(\pi^\mathcal{S}_i, \pi^\mathcal{A}_i, \MDP{i}, \boldsymbol{\xi}_i, V_i,\DecPriShort{i})_{i=1}^n$, of reference policies, agent policies, MDPs, state paths, base utilities, and priors, such that $w_i = \Belief{i}V_i$ and $p_i = -\log(\Belief{i})$.
\end{proposition}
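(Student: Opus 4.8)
The plan is to reduce the two required identities to realizing an arbitrary target value of the belief $\Belief{i}$, and then to exhibit a minimal gadget MDP on which the likelihood ratio along an observed path can be set freely. From $p_i = -\log(\Belief{i})$ we need $\Belief{i} = e^{-p_i} =: t_i \in (0,1)$ (legitimate since $p_i > 0$), and then $w_i = \Belief{i}V_i$ forces $V_i = w_i e^{p_i} > 0$, an admissible base utility. It remains to realize $\Belief{i} = t_i$ through the update (\ref{eq:SupervisorBayesUpdate}). Writing $\alpha := \DecPriShort{i}$ and $\rho := \NonDecCond{i}/\DecCond{i}$ for the path likelihood ratio, a short calculation shows $\Belief{i} = t_i$ iff $\rho = \frac{\alpha}{1-\alpha}\cdot\frac{t_i}{1-t_i}$. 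Picking the prior $\DecPriShort{i} = \frac{1-t_i}{1+t_i}\in(0,1)$ gives $\frac{\alpha}{1-\alpha} = \frac{1-t_i}{2t_i}$, so the required likelihood ratio collapses to the constant $\rho = \tfrac{1}{2}$ for every $i$.

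Next I would build the gadget. Let every $\MDP{i}$ be the three-state MDP with states $\{s_0,s_1,s_2\}$, with $s_1$ and $s_2$ absorbing, two actions $a_1,a_2$ at $s_0$, and deterministic transitions $P(s_0,a_1,s_1)=P(s_0,a_2,s_2)=1$ (set $\ReachSet{A}{i}=\{s_1\}$ if a target set is wanted), and let $\boldsymbol{\xi}_i$ be the single path $\xi = s_0 s_1 s_1\cdots$, i.e.\ $\NumRounds = 1$ (if $\NumRounds$ is prescribed, repeat $\xi$ $\NumRounds$ times and use $\rho^{\NumRounds}$ in place of $\rho$). For any stationary $\pi$ on this MDP, $\Pr(\xi\mid\pi) = \pi(s_0,a_1)$, since $a_1$ is the only action producing $\xi$, so the likelihood ratio along $\boldsymbol{\xi}_i$ is $\Pol{S}{i}(s_0,a_1)/\Pol{A}{i}(s_0,a_1)$. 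Taking $\Pol{S}{i}(s_0,a_1) = \tfrac{1}{2}$ and $\Pol{A}{i}(s_0,a_1) = 1$ gives valid stationary policies, both assigning positive probability to $a_1$ so that $\boldsymbol{\xi}_i$ is a legitimate path under each, with likelihood ratio exactly $\tfrac{1}{2} = \rho$.

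Finally I would verify by substitution: plugging $\rho = \tfrac{1}{2}$ and $\DecPriShort{i} = \frac{1-t_i}{1+t_i}$ into (\ref{eq:SupervisorBayesUpdate}) returns $\Belief{i} = t_i = e^{-p_i}$, whence $-\log(\Belief{i}) = p_i$ and $\Belief{i}V_i = e^{-p_i} w_i e^{p_i} = w_i$, as required, and all side conditions ($\DecPriShort{i}\in(0,1)$, $V_i\ge 0$, stationarity of the policies, validity of the path) hold. I do not expect a genuine obstacle: the crux is only that for each fixed prior the update (\ref{eq:SupervisorBayesUpdate}) is a bijection from path likelihood ratios onto $(0,1)$, and an arbitrary likelihood ratio is immediately realizable by a one-state decision; the sole point requiring care is choosing the prior so the constructed policies never exceed probability one, which the choice $\DecPriShort{i} = \frac{1-t_i}{1+t_i}$ arranges.
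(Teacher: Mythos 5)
Your proposal is correct and follows essentially the same route as the paper: the same three-state gadget MDP with two deterministic actions from the initial state, the same single observed path, and the same mechanism of realizing an arbitrary target belief by controlling the likelihood ratio $\Pol{S}{i}(s_0,a_1)/\Pol{A}{i}(s_0,a_1)$ and then solving $V_i = w_i/\Belief{i}$. The only (harmless) difference is bookkeeping: the paper fixes a common prior $\kappa$ and tunes the likelihood ratio per agent, whereas you fix the likelihood ratio at $\tfrac{1}{2}$ and tune the prior per agent.
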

\begin{proof}
Consider an MDP with $S = \{o,a,b\}$, $A = \{1,2\}$, and $s_0 = o$. States $a$ and $b$ are absorbing, and $P({o,1,a}) = P({o,2,b}) = 1$. For all $i$, $\MDP{i} = (S,A,P,s_0)$. 
For all $i$, fix paths as $\boldsymbol{\xi}_i = (0,a,a,a,\ldots)$, and set $\DecPriShort{i} =\kappa \in (0,1)$. Note that $\NumRounds = 1$. We now design policies $\Pol{A}{i}$ and $\Pol{S}{i}$. 
Likelihood ratios are given by 
\begin{equation}
    \frac{\NonDecCond{i}}{\DecCond{i}} = \frac{\Pol{S}{i}(o,1)}{\Pol{A}{i}(o,1)},
\end{equation}
and we set $\nicefrac{\Pol{S}{i}(o,1)}{\Pol{A}{i}(o,1)} = \nicefrac{e^{-p_i}\kappa}{((1-e^{-p_i})(1-\kappa))}$ so that 
\begin{equation}
    \Belief{i} = 1 - \frac{\kappa}{\kappa + (1-\kappa)\left(\nicefrac{\Pol{S}{i}(o,1)}{\Pol{A}{i}(o,1)}\right)} = \exp(-p_i).
    \label{eq:CoveringProofCond}
\end{equation}
As $V_i$ is free, and $\Belief{i} > 0$, we set $V_i$ such that $w_i = \Belief{i}V_i$.
$\square$
\end{proof}
In Problem~\ref{prob:DecOptimizationGeneral}, the function $f$, which represents the supervisor's procedure for eliminating agents, takes belief proxies $\hat{\theta}$ as input, rather than true beliefs $\theta$.
However, we may prove Proposition~\ref{prop:knapsack} for $\hat{\theta}$ as well. We simply control $e^{-\textrm{KL}_i}$ instead of $\nicefrac{\Pol{S}{i}(o,1)}{\Pol{A}{i}(o,1)}$. It is easy to modify the policy pairs to set $\textrm{KL}_i$ to any value in $[0,\infty)$, and $e^{-\textrm{KL}_i}$ to any value on $(0,1]$. We then need to choose $\kappa$ small enough so that $\nicefrac{e^{-p_i}\kappa}{((1-e^{-p_i})(1-\kappa))} \in (0,1)$ for all $i\in[n]$.

Problem~\ref{prob:DecOptimizationGeneral} is also a bi-level knapsack problem, with the agents as the leader and the supervisor as the follower, and the complexity of bi-level knapsack problems \cite{pferschy2019,pferschy2021} reinforces the difficulty of Problem~\ref{prob:DecOptimizationGeneral}.
For example, if a leader controls weights in a knapsack problem solved by a follower, and the leader gets rewards based on the items selected, the problem of finding the optimal weights is not approximable \cite{pferschy2019}. 
While Problem~\ref{prob:DecOptimizationGeneral} is distinct in form from problems explored in \cite{pferschy2019,pferschy2021}, the hardness results further suggest the difficulty of Problem~\ref{prob:DecOptimizationGeneral}.

We restrict Problem~\ref{prob:DecOptimizationGeneral} such that 
$V_1 = \ldots = V_n = 1$, leading to a simple solution for the supervisor's problem.
With this restriction, the supervisor solves
\begin{equation} 
     \max_T \quad \sum_{i\in T}-\log (\Belief{i}) \quad 
     \textrm{subject to} \quad \sum_{i\in T} \Belief{i} \leq C, \label{subeq:resultsKS}
\end{equation}
to find a subset $T$ of agents to eliminate, and we may construct
the solution by adding agents to $T$ in increasing order of $\Belief{i}$ until we violate the constraint in (\ref{subeq:resultsKS}).
The interpretation of $V_i = V_j$ is that agents $i$ and $j$ have the same expected utility to the supervisor if $i$ and $j$ are equally believed to be non-deceptive. If $V_i > V_j$, the supervisor may prefer to eliminate agent $j$ even if $\Belief{i} < \Belief{j}$.

\subsection{Synthesis of Optimal Deceptive Policies under Elimination}
We reformulate Problem~\ref{prob:DecOptimizationGeneral} (\textit{\AgProbFull}) to apply the methods used for Problem~\ref{prob:AgentSimple}.

We first replace $f$, which defines the eliminated agents, with constraints defining the optimal subset $T$. This replacement yields the optimization problem
\begin{subequations}
    \label{eq:DecOptimizationReformI}
    \begin{align}
        \underset{(\pi_1^\mathcal{A},\ldots,\pi_n^\mathcal{A})\in\boldsymbol{\Pi}(\MDP{i}),T,C}{\max} \ & C\\
        \textrm{subject to} \quad \quad \ & \ \BeliefProxy{i}{\Pol{A}{i}} <                                \BeliefProxy{j}{\Pol{A}{j}}, \quad  \forall i\in T, \forall j\in [n]\setminus T, \label{subeq:KS3} \\
        & \sum_{i\in T} \BeliefProxy{i}{\Pol{A}{i}} \leq C,\label{subeq:KS1} \\
        & \sum_{i\in T} \BeliefProxy{i}{\Pol{A}{i}} + \BeliefProxy{j}{\Pol{A}{j}}> C, \quad \forall j \in [n]\setminus T, \label{subeq:KS2} \\
        & \Pr (\exists i\in [n]\setminus T:  s_0^{i}\models \Diamond R^\mathcal{A}_i ) \geq \nu_\mathcal{A}.
    \end{align}
\end{subequations}
As $V_1 = \ldots = V_n$, the set $T$ contains the $|T|$ agents with the lowest values of $\hat{\theta}$, and constraint (\ref{subeq:KS3}) encodes this fact. Constraint (\ref{subeq:KS1}) then enforces the knapsack constraint from (\ref{subeq:resultsKS}). Finally, constraint (\ref{subeq:KS2}) 
ensures that $T$ is optimal, as if we add any agent to $T$, then we violate the constraint in (\ref{subeq:resultsKS}). We note that (\ref{subeq:KS3}) restricts the set of feasible policies such that the agents in $T$ and $[n]\setminus T$ may not have the same value of $\smash{\hat{\theta}}$. 

We may manipulate $\hat{\theta}$ to force the elimination of certain decoy agents first, 
 and we simplify (\ref{eq:DecOptimizationReformI}) by considering this interpretation of $T$ as containing decoys.
For fixed $T$ and $\Pol{A}{i}$, 
    $ C = \sum_{i\in T} \BeliefProxy{i}{\Pol{A}{i}} + \min_{i\in [n]\setminus T} \ \BeliefProxy{i}{\Pol{A}{i}} - \delta$ 
is optimal, where we add $\delta \approx 0$ due to the strict inequality in (\ref{subeq:KS2}). 
This value of $C$ corresponds to the supervisor eliminating all decoy agents $i\in T$ and almost having the budget to eliminate the non-decoy agent with the lowest belief, $\min_{i\in [n]\setminus T} \ \BeliefProxyTEXT{i}{\Pol{A}{i}}$.
The decoys should also have maximum expected utility to the supervisor, subject to the constraint that their expected utility is lower than that of the non-decoy agents.
As $V_1 = \ldots = V_n$, we may set
$\Pol{A}{i}$ such that $\BeliefProxyTEXT{i}{\Pol{A}{i}}$= $\gamma \min_{j\in [n]\setminus T} \ \BeliefProxyTEXT{j}{\Pol{A}{j}}$ for all $i\in T$.   
The scalar $\gamma \in (0,1)$ accounts for the strict inequality in (\ref{subeq:KS3}).

Applying these equalities gives the optimization problem
\begin{subequations}
    \label{eq:DecOptimizationReformII}
    \begin{align}
        \underset{(\pi_1^\mathcal{A},\ldots,\pi_n^\mathcal{A})\in\boldsymbol{\Pi}(\MDP{i}),T,M}{\max} \quad & |T|(M \cdot \gamma) + M - \delta\\
        \textrm{subject to}
        \quad \quad \quad & \Pr (\exists i\in [n]\setminus T:  s_0^{i}\models \Diamond R^\mathcal{A}_i ) \geq \nu_\mathcal{A},\\
        & \ \BeliefProxy{i}{\Pol{A}{i}} \geq M, \quad \forall i\in [n]\setminus T,\\
        & \ \BeliefProxy{i}{\Pol{A}{i}} = M\cdot \gamma, 
        \quad \forall i \in T.
    \end{align}
\end{subequations}
To solve (\ref{eq:DecOptimizationReformII}), we sweep the size of $T$, and for each $|T| = k$, we optimize the decoy assignment and agent policies to maximize $\min_{i\in [n]\setminus T} \ \BeliefProxyTEXT{i}{\Pol{A}{i}}$.

For fixed $|T|$,
we reformulate (\ref{eq:DecOptimizationReformII}) by substituting KL divergence for belief proxies so that we may apply a similar line search procedure to Algorithm~\ref{alg:DeceptiveLineSearch}. 
Assuming $p_{\mathcal{D},1} = \ldots = p_{\mathcal{D},n}$, rearranging the definition of the belief proxy yields
\begin{equation}
    \BeliefProxy{i}{\Pol{A}{i}} > \BeliefProxy{j}{\Pol{A}{j}} \ \textrm{if and only if} \ \KL{\Pol{A}{i}}{\Pol{S}{i}}  < \KL{\Pol{A}{j}}{\Pol{S}{j}}.
\end{equation}
As such, we may equivalently minimize the maximum KL divergence among the $n-k$ agents in $[n]\setminus T$, which yields
\begin{subequations}
    \label{eq:FinalFormDecGeneral}
    \begin{align}
        \underset{(\pi_1^\mathcal{A},\ldots,\pi_n^\mathcal{A})\in\boldsymbol{\Pi}(\MDP{i}),T}{\min} \ & \max_{i\in [n]\setminus T} 
        \KL{\Pol{A}{i}}{\Pol{S}{i}} \\
        \textrm{subject to} \quad \quad
        & \   \KL{\Pol{A}{i}}{\Pol{S}{i}} = K\cdot \gamma', \quad \forall i\in T, \label{subeq:Decoy}\\
        & \ \Pr (\exists i\in [n]\setminus T:  s_0^{i}\models \Diamond R^\mathcal{A}_i ) \geq \nu_\mathcal{A}, \\
        & \ |T| = k.
    \end{align}
\end{subequations}
We replace tolerances $\gamma$ in $\BeliefProxy{i}{\Pol{A}{i}}$ with tolerances in KL divergence $\gamma' \in (1,\infty)$. Tuning $\gamma'$ controls how much more decoys deviate compared to non-decoys. 
We remark that while the non-linear equality constraint (\ref{subeq:Decoy}) appears difficult to satisfy, we can find the required policy with convex combinations in the policy space between $\pi_i^\mathcal{S}$ and a policy $\pi_i$ with KL divergence greater than $K \cdot \gamma'$.

Algorithm~\ref{alg:DeceptiveSubset} combines a modified version of the line search procedure from Algorithm~\ref{alg:DeceptiveLineSearch} with a sweep across subset sizes to solve (\ref{eq:FinalFormDecGeneral}). 
We modify \textproc{ReachEvaluate} by computing the disjunctive reachability using the $n-k$ agents that have maximum reachability probability. These $n-k$ agents are non-decoy agents.

\begin{algorithm}
\caption{Elimination-aware deceptive policy synthesis}
\label{alg:DeceptiveSubset}
\begin{algorithmic}[1]
\Procedure{DeceptiveSubsetSelection}{$(\pi^\mathcal{S}_i, R^\mathcal{A}_i)_{i=1}^n$, $\nu_\mathcal{A}$,$K_{\max}$,$\varepsilon$,$p$,$\gamma'$,$\NumRounds$}

\For{$k=0, \dots, n-1$}
    \State $B_k \gets 0$    \algorithmiccomment{Assume initially that $k$ decoys are not feasible}
    \State $K, Fail_k \gets $ \Call{SubsetSearch}{$(\pi^\mathcal{S}_i, \Diamond R^\mathcal{A}_i)_{i=1}^n$, $\nu_\mathcal{A}$,$K_{\max}$,$\varepsilon$, $n-k$} \label{line:FailPoint}
    \If{$\neg Fail_k$}
        \State $M' \gets 1 - \frac{p}{p + (1-p)e^{-\NumRounds K\gamma'}}$, $M \gets 1 - \frac{p}{p + (1-p)e^{-\NumRounds K}}, B_k \gets k\cdot M' + M \label{Line:BComp}$
    \EndIf
\EndFor
\State $k^* \gets \arg \max B_k$
\EndProcedure
\Procedure{SubsetSearch}{$(\pi^\mathcal{S}_i, R^\mathcal{A}_i)_{i=1}^n$, $\nu_\mathcal{A}$, $K_{\max}$,$\varepsilon$, $w$}
    \State $\overline{K} \gets $ \Call{Bisection}{\textproc{ReachEvaluateSub}($(\pi^\mathcal{S}_i,R^\mathcal{A}_i)_{i=1}^n$, $\nu_\mathcal{A}$,$\cdot$,$w$),$[0,K_{\max}]$}
    \State $\nu - \nu_\mathcal{A} \gets$ \Call{ReachEvaluateSub}{$(\pi^\mathcal{S}_i, R^\mathcal{A}_i)_{i=1}^n$, $\nu_\mathcal{A}$,$\overline{K}$,$w$}
    \State \Return $\overline{K}, (\nu - \nu_\mathcal{A} < 0)$ \algorithmiccomment{Validate whether the final policy satisfies reach.}
\EndProcedure
\Procedure{ReachEvaluateSub}{$(\pi^\mathcal{S}_i, R^\mathcal{A}_i)_{i=1}^n$, $\nu_\mathcal{A}$,$K$,$w$}
    \State $\nu_i \gets \textproc{Reach}( \pi^\mathcal{S}_i, R^\mathcal{A}_i, K), \quad \forall i\in[n]$
    \State $\mathcal{N} \gets \Call{BestKElements}{w,\{\nu_i\}}$ \algorithmiccomment{Get indices of $w$ highest $\nu_i$ values}\label{line:ComplementNonDecoy}
    \State $\nu \gets 1 - \prod_{i\in \mathcal{N}} (1 - \nu_i)$
    \State \Return $\nu - \nu_\mathcal{A}$ 
\EndProcedure
\end{algorithmic}
\end{algorithm}

We again give a method to set $K_{\max}$.
Compute $K_{\max}'$ using the process given in Section~\ref{sec:AgentPolicyConstructionSimple} for Algorithm~\ref{alg:DeceptiveLineSearch}, and compute $B_0$ using $K = K_{\max}'$ in Line~\ref{Line:BComp}. This $B_0$ value lower-bounds the optimal value of (\ref{eq:DecOptimizationReformII}). 
Then, compute $K_{\max}$ as the value such that, using $K=K_{\max}$ in Line~\ref{Line:BComp}, $B_{n-1} = B_0$. 
For $|T|=n-1$, if the non-decoy agents have KL divergence above $K_{\max}$, the solution is worse than the solution for $|T|=0$ with KL divergence of $K_{\max}'$.
In fact, for any $k$, the solution is sub-optimal if the non-decoy agents' KL divergence exceeds $K_{\max}$.
The $Fail_k$ flag is set if no solution exists with KL divergence below $K_{\max}$ for $|T| = k$.

We also assume that, for all $i\in[n]$, we may synthesize policies $\Pol{*}{i}$ with KL divergence value higher than the optimal value of (\ref{eq:FinalFormDecGeneral}), but we can also use Algorithm~\ref{alg:DeceptiveSubset} without this assumption.
We need this assumption so that any agent $i$ may be a decoy.
Without this assumption, we modify Line~\ref{line:ComplementNonDecoy}. 
If $K$ is above 
the maximum KL divergence that agent $i$ may attain, then $i$ can not be a decoy, and we include $i$ in $\mathcal{N}$ before adding agents to $\mathcal{N}$ based on their reachability.
\section{Numerical Results}
\label{sec:Results}

We illustrate deceptive policy synthesis in a delivery example, where weather-induced stochasticity provides plausible deniability for deceptive agents.
A supervisor specifies $\Pol{S}{i}$ for $n$ drones so that each drone delivers a package to some target. 
However, an external intruder with access to the drones must ensure that, with high probability, at least one drone delivers a package to their location.

We define the drones' MDPs and targets via an undirected graph $G = (V,E)$, where $V$ contains regions, and $E$ contains connections between regions.
The state space is $V\times\{0,1\}$, where a drone in state $(v,0)$ is in flight in $v$, and a drone in state $(v,1)$ has landed in $v$.
Given $N \subseteq V$, the agents' target states, for each $i\in[n]$, are $R_i^\mathcal{A} = \{(v,1) | v\in N\}$. 
For each $i\in[n]$, the supervisor's target state is $(n_i,1) $ for some $n_i\in V$. All landed states are absorbing.

The actions available to a drone at state $(v,0)$ are moving on the graph or landing.
If a drone uses action $a_u$, for node $u$ adjacent to $v$, the drone will transition to $(u,0)$ with probability $p_{target}$.
However, due to weather, the drone may transition to $(v,1)$ with probability $p_{land}$, or to an adjacent flight node $w\neq u$, with probability $\nicefrac{(1 - p_{land} - p_{target})}{(|\textrm{adj}(v)| - 1)}$. The set of nodes adjacent to $v$ is $\textrm{adj}(v)$.
When the drone takes a landing action at state $(v,0)$, the drone will transition to $(v,1)$ with probability $p_{target} + p_{land}$ and will transition to an adjacent flight state $(u,0)$ with probability $\nicefrac{(1 - p_{target} - p_{land})}{|\textrm{adj}(v)|}$.

We specify reference policies such that the agent $i$ moves along the shortest path in the graph $G$ toward the target node $n_i$. 

\subsection{Worst-Case Deceptive Policy Synthesis and Reference Policy Synthesis}
\begin{figure}[t]
    \centering
    \parbox{.93\textwidth}{
    \begin{subfigure}{0.3\textwidth}
        \includegraphics[width=\textwidth]{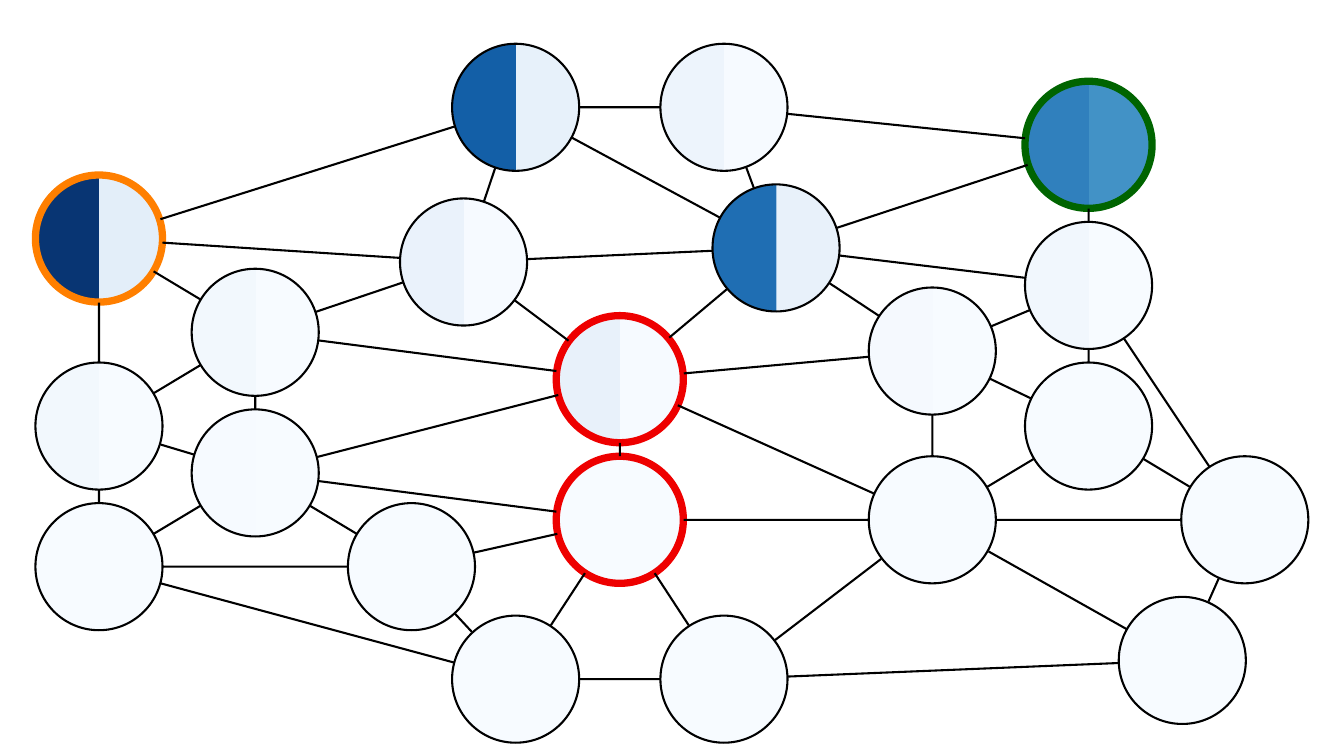} 
        \caption{$\Pol{S}{1}$}
        \label{fig:sup1}
    \end{subfigure}
    \hfill
    \begin{subfigure}{0.3\textwidth}
        \includegraphics[width=\textwidth]{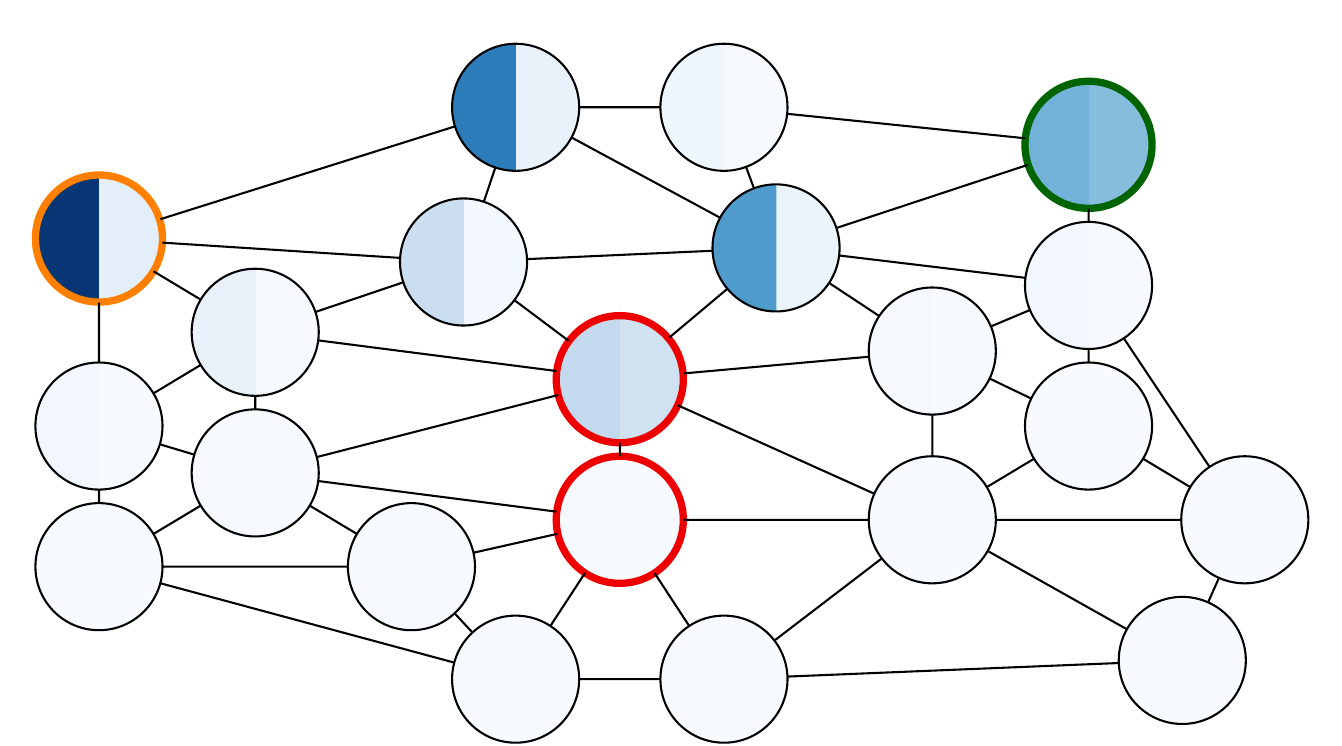} 
        \caption{$\Pol{A}{1}$}
        \label{fig:ag01}
    \end{subfigure}
    \hfill
    \begin{subfigure}{0.3\textwidth}
        \includegraphics[width=\textwidth]{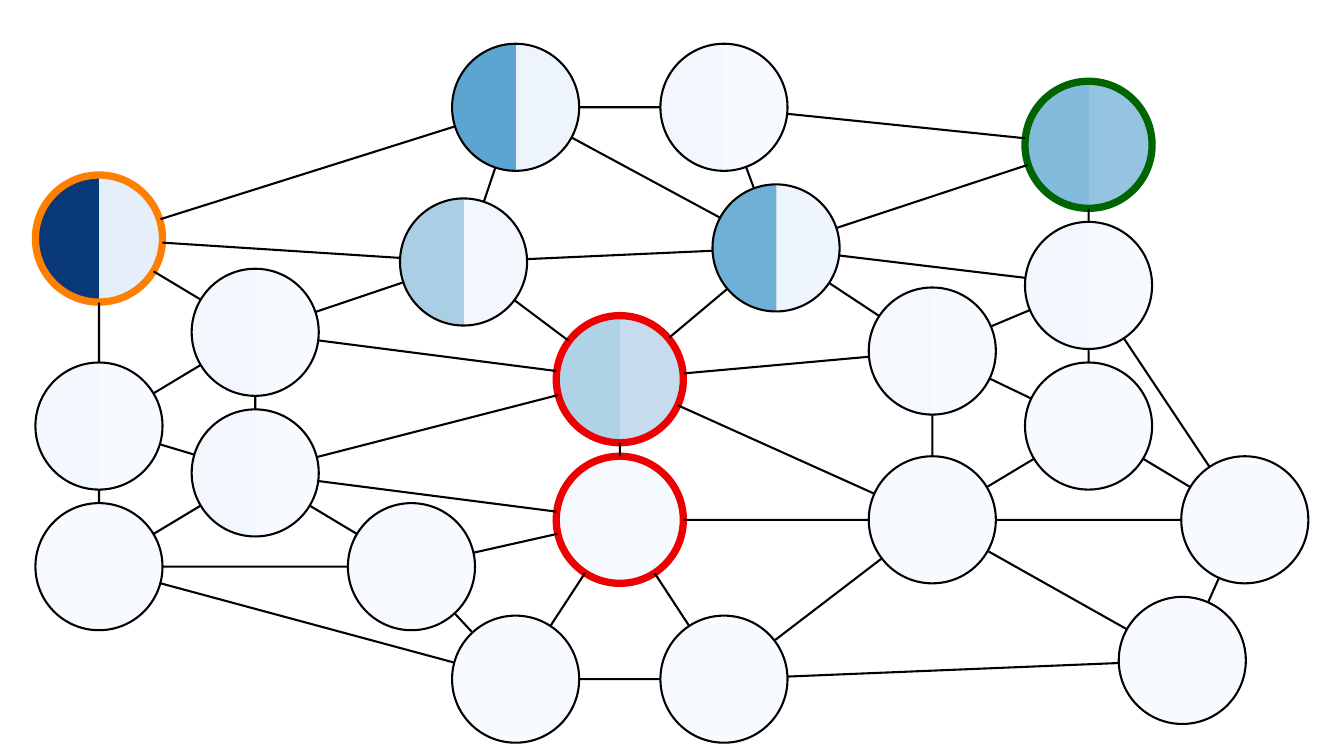} 
        \caption{Decoy $\Pol{A}{1}$}
        \label{fig:ag11}
    \end{subfigure}
    \begin{subfigure}{0.3\textwidth}
        \includegraphics[width=\textwidth]{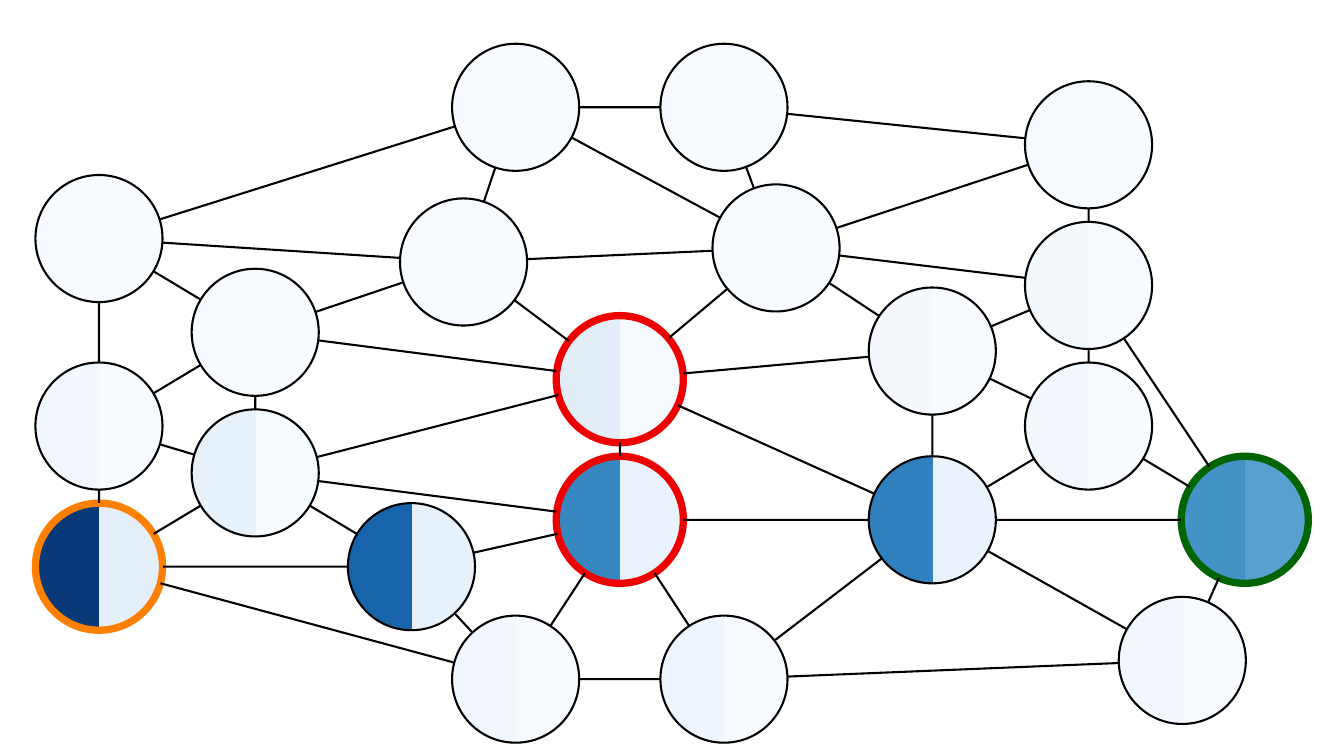}
        \caption{$\Pol{S}{2}$}
        \label{fig:sup2}
    \end{subfigure}
    \hfill
    \begin{subfigure}{0.3\textwidth}
        \includegraphics[width=\textwidth]{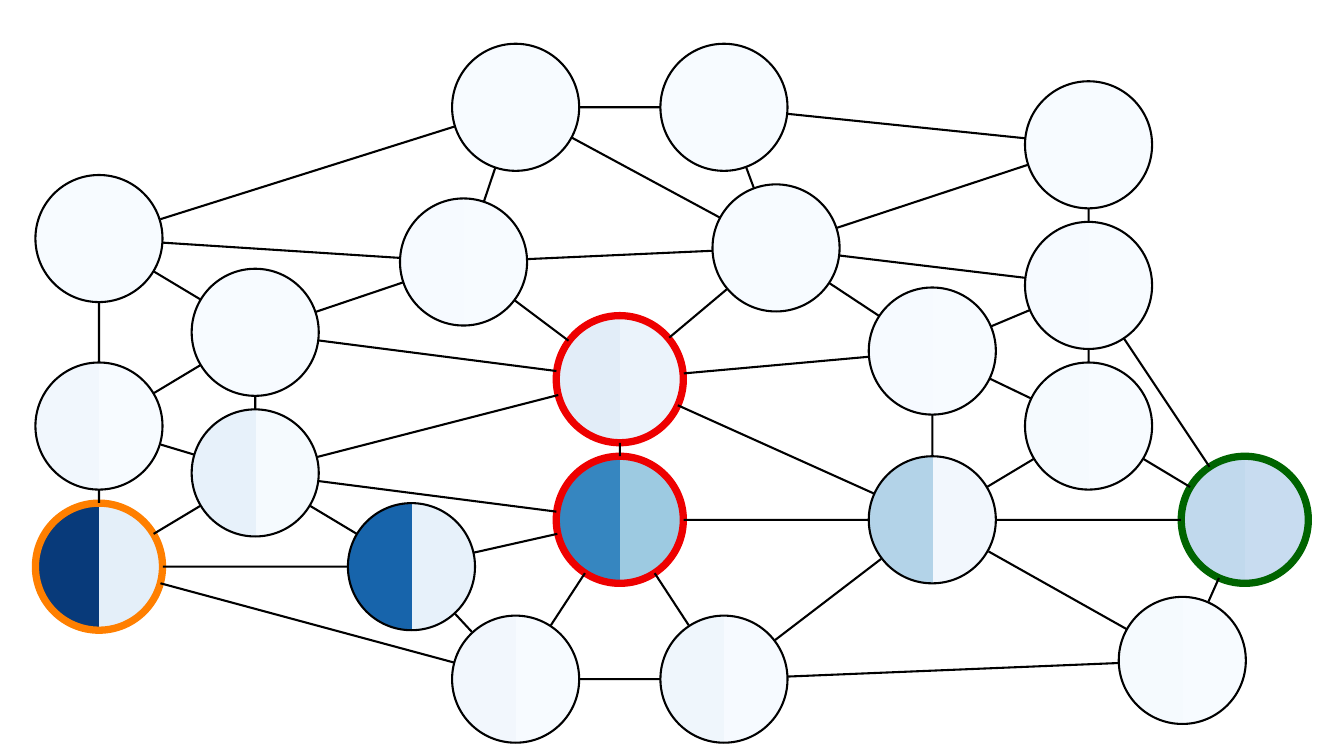} 
        \caption{$\Pol{A}{2}$}
        \label{fig:ag02}
    \end{subfigure}
    \hfill
    \begin{subfigure}{0.3\textwidth}
        \includegraphics[width=\textwidth]{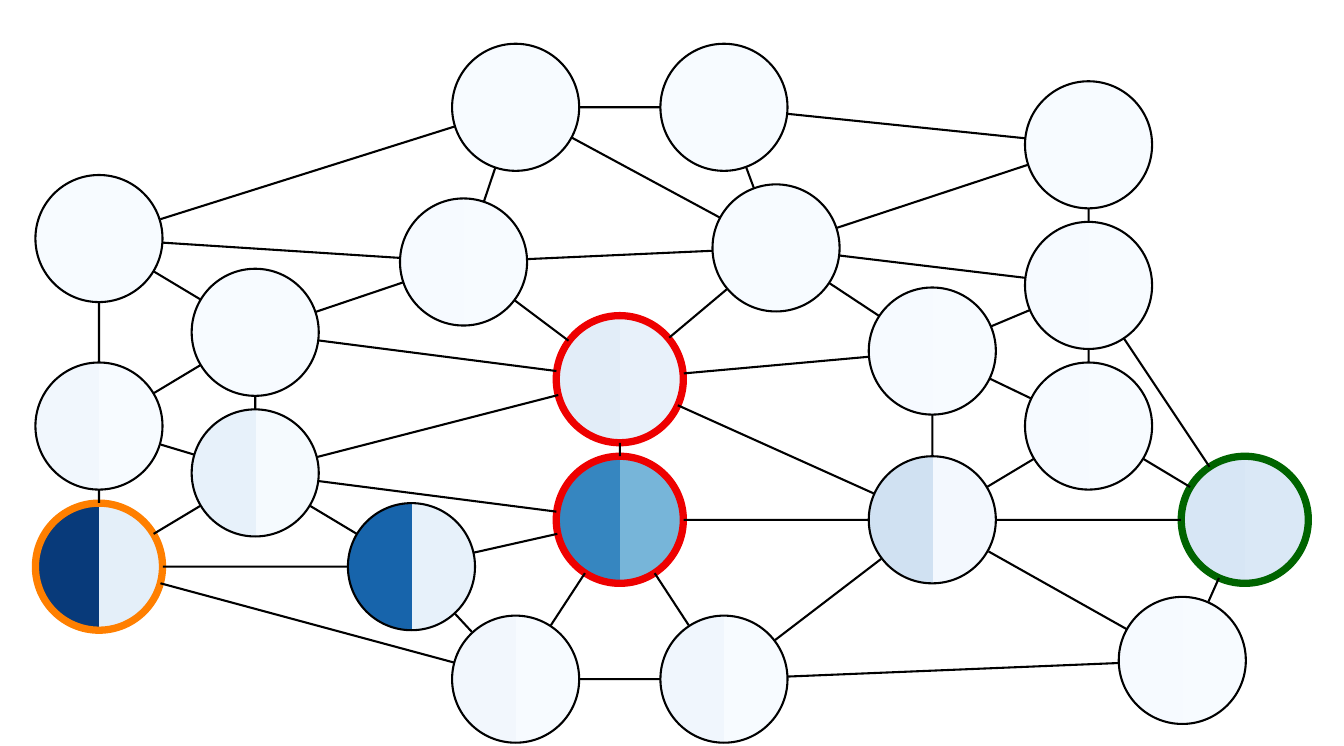} 
        \caption{Non-Decoy $\Pol{A}{2}$}
        \label{fig:ag12}
    \end{subfigure}
    }
    \hfill
    \begin{subfigure}[C]{0.06\textwidth}
        \includegraphics[width=\textwidth]{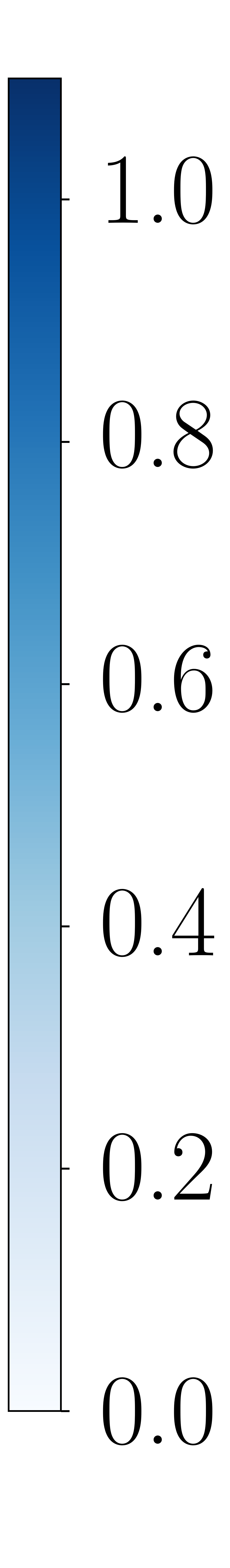} 
        \vfill
    \end{subfigure}
    \caption{Package delivery example policies.
    The left color of node $v$ is the occupancy measure of state $(v,0)$. The right color of a node $v$ is the occupancy flow into state $(v,1)$, which is equal to $\Pr(s_0^i \models \Diamond\{(v,1)\})$.
    Nodes with green, red, and orange borders are supervisor target nodes, agent target nodes, and initial states, respectively. 
    (a) and (d) are reference policies. (b) and (e) are deceptive policies for no decoys, and are policies synthesized by Algorithm~\ref{alg:DeceptiveLineSearch}. (c) and (f) show policies synthesized by Algorithm~\ref{alg:DeceptiveSubset} for one decoy agent with $\gamma' = 1.2$.} 
    \label{fig:DeceptiveSubsetNavExample}
\end{figure}
\newcommand\vSupi{0.009}
\newcommand\vSupii{0.09}
\newcommand\vAgenti{0.23}
\newcommand\vAgentii{0.48}
\newcommand\klAgent{0.59}
\newcommand\vAgentDeci{0.28}
\newcommand\vAgentDecii{0.6}
\newcommand\klAgentDeci{0.94}
\newcommand\klAgentDecii{1.12}

The first two columns of Figure~\ref{fig:DeceptiveSubsetNavExample} show
\textit{\AgProbSimple} \ for $n = 2$ and $\nu_\mathcal{A} = 0.6$, and we observe that the relative reachability probabilities for the agents match how suited their reference policies are to the task.
Under $\Pol{S}{2}$, agent $2$ already reaches $N\times\{1\}$ with probability \vSupii, as opposed to \vSupi \ for agent $1$, and so agent $2$ needs to deviate less to achieve higher reachability for the same KL divergence budget.
As such, under deceptive policies, agents $1$ and $2$ reach the target states with probabilities \vAgenti, and \vAgentii, respectively.

Figure~\ref{fig:RefPol} shows the reference policies we obtain by using Algorithm~\ref{alg:DeceptiveLineSearch} as a subroutine in the gradient ascent with max-oracle algorithm \cite{lin2019}.
As expected, this approach creates reference policies that avoid the agents' target states.
\subsection{Elimination-Aware Deceptive Policy Synthesis}
Figures~\ref{fig:ag11} and \ref{fig:ag12} show deceptive policies synthesized with one decoy, and we see that Algorithm~\ref{alg:DeceptiveSubset} allocates agent $1$ as the decoy as agent $1$ is less capable of completing the task under $\Pol{S}{1}$. 

We also consider an instance of the package delivery example with $n=8$ to demonstrate 
the trade-off between redundancy and detectability that arises when using decoys. In this example, the supervisor assigns $\Pol{S}{1}$ in Figure~\ref{fig:DeceptiveSubsetNavExample} to four agents and $\Pol{S}{2}$ to the other agents. We show the results in Figure~\ref{fig:BkChange}.
With no decoys, each agent needs a small reachability probability and has a low KL divergence, 
but the supervisor only needs to eliminate one agent for the reachability probability to drop below $0.6$.
Meanwhile, with seven decoys, all agents have large deviations, so the supervisor sacrifices little utility by eliminating all of the agents.
A mix of decoy and non-decoy agents maximizes the utility the supervisor must sacrifice so that the remaining agents complete the task with a probability below $0.6$.

\begin{figure}[t]
\begin{minipage}[b]{.58\textwidth}
    \captionsetup{type=figure}
    \centering
    \begin{minipage}{.49\textwidth}
    \includegraphics[width=\textwidth]{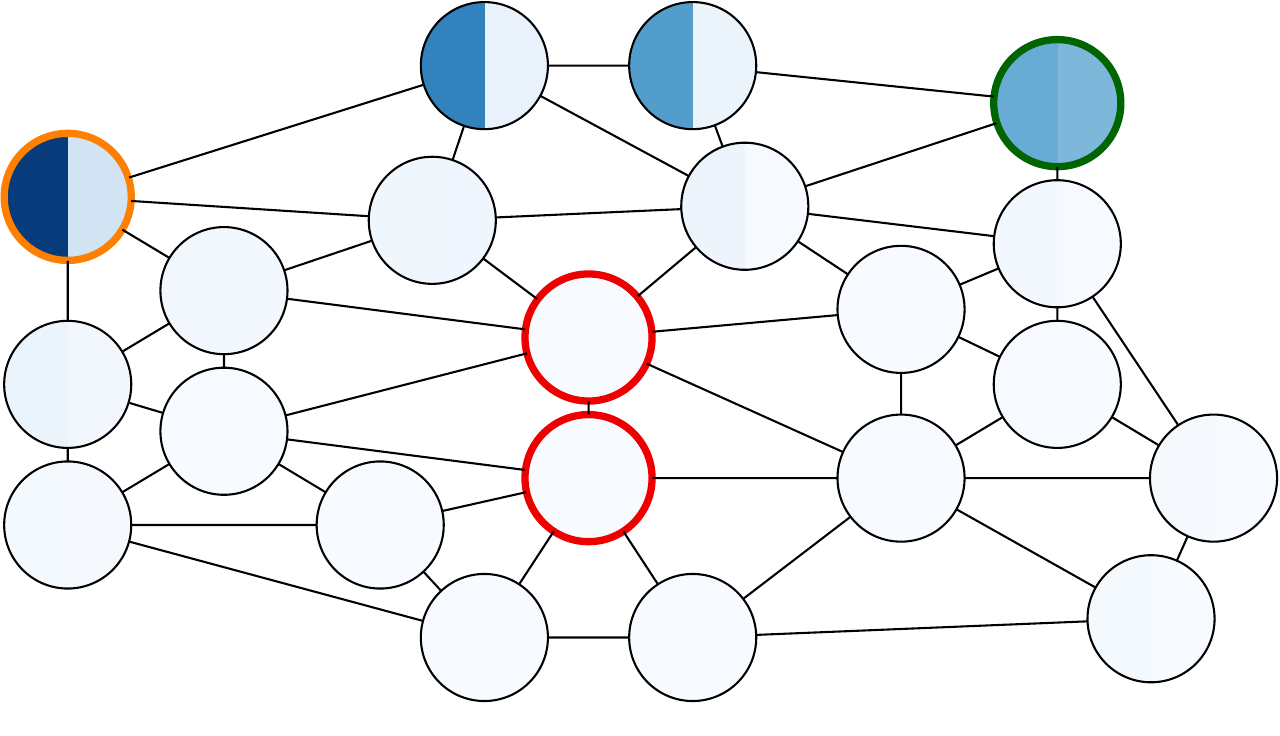}
    \centering
    (a) $\pi_1^{\mathcal{S}'}$
    \end{minipage}
    \begin{minipage}{0.49\textwidth}
    \includegraphics[width=\textwidth]{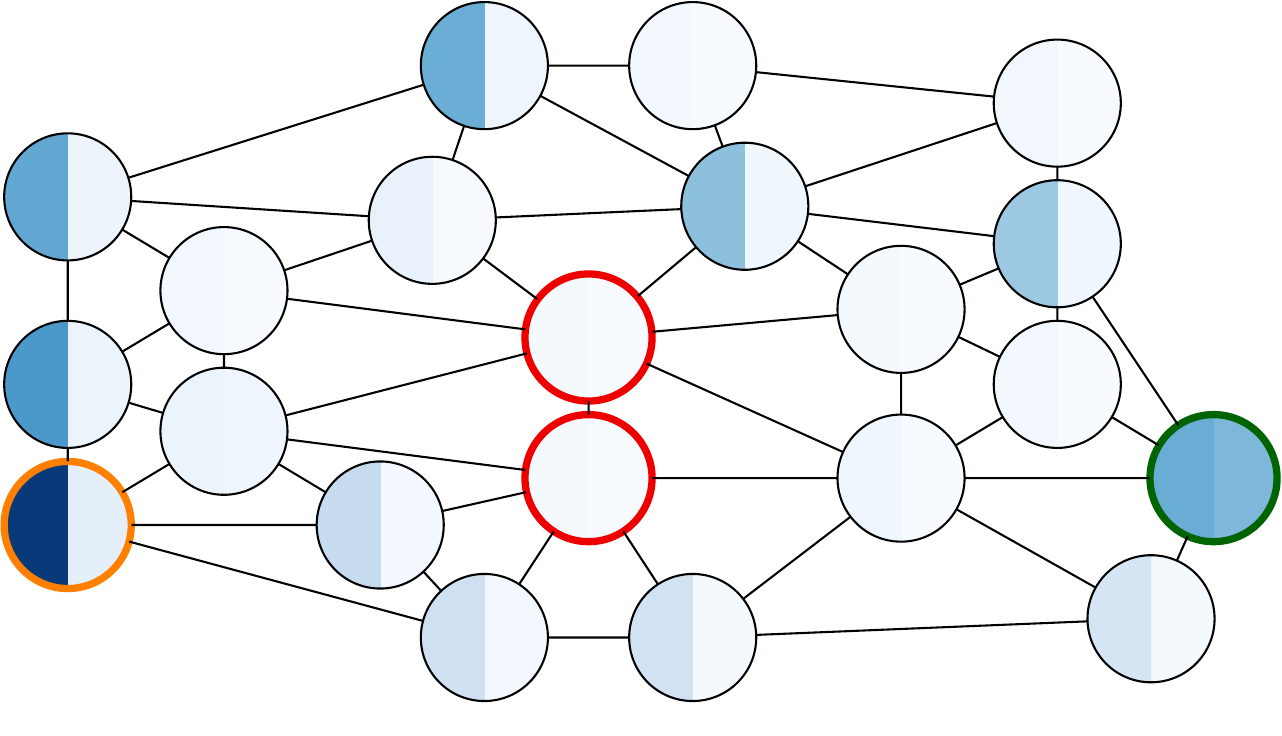}
    \centering
    (b) $\pi_2^{\mathcal{S}'}$
    \end{minipage}%
  \captionof{figure}{Synthesis of reference policies using gradient-ascent with max oracle with Algorithm~\ref{alg:DeceptiveLineSearch} used to solve the inner problem.} 
  \label{fig:RefPol}
\end{minipage}%
\hfill
\begin{minipage}[b]{.4\textwidth}
  \centering
  \begin{tikzpicture}
\begin{axis}[
    xlabel={Number of decoy agents},
    ylabel style={align=center},
    label style={font=\footnotesize},
    ylabel={Cost of eliminating \\decoy agents and one \\ non-decoy agent, $B_k$},
    xmin=0, xmax=7,
    ymin=0, ymax=1.7,
    xtick={0,1,2,3,4,5,6,7},
    ytick={0,0.4,0.8,1.2,1.6},
    legend pos=north west,
    ymajorgrids=true,
    grid style=dashed,
    height = 110pt,
    width = 110pt,
    tick label style={font=\footnotesize},
]
\addplot[
    only marks,
    color=blue,
    mark=asterisk,
    ]
    coordinates {
    (0,0.4081030340991759)(1,0.7677361279331695)(2,1.0842392570852315)(3,1.3434626349266914)(4,1.5262059253368303)(5,0.9428527211469809)(6,0.1583192736894975)(7,0.00017428565384280947)
    };
\end{axis}
\end{tikzpicture}
\captionof{figure}{Values of $B_k$ with $n=8$. }
  \label{fig:BkChange}
\end{minipage}%
\end{figure}
\section{Conclusion}
Building secure multi-agent systems requires system designers to consider team deception.
We explored the synthesis of deceptive policies for agents who deceive their supervisor by deviating from assigned reference behavior.
We formulated the agents' problem as finding decentralized policies that minimize the KL divergence between the agents' behavior and the reference while ensuring that, with high probability, at least one agent reaches a target.
While the formulation led to a non-convex optimization problem, we provided a scalable method to synthesize optimal policies.
We then demonstrated how a supervisor may use this method to improve the security of reference policies.
We also analyzed an extension of the problem where agents synthesize policies to ensure that after the supervisor eliminates some agents, the remaining agents complete the task.
This problem is difficult as the agents must reason about the subset selection procedure the supervisor uses to eliminate agents.
However, we explored a restriction to make the problem tractable and gave an algorithm for choosing deceptive policies that control the order in which the supervisor eliminates agents.

In this work, we explored disjunctive reachability, and we minimized the maximum deviation among the agents.
Future work will explore different couplings of the agents through their objectives and different measures of the team's detectability.

\section{Appendix}
\subsection{Sufficiency of Occupancy Measure Formulation}
\label{sec:occMeasureChecks}
We show the sufficiency of occupancy measure-based formulations for Problem~\ref{prob:AgentSimple}, following \cite{karabag2021}.
We require finiteness of the optimal value of Problem~\ref{prob:AgentSimple} for the following results, but we may check this finiteness by finding $K_{\max}$ with the method we describe in Section~\ref{sec:AgentPolicyConstructionSimple}. \

We first recall the state-space decomposition from \cite{karabag2021}.
Let $C^{cl}_i$ be the union of the closed communicating classes of the Markov chain induced by $(\MDP{i}, \Pol{S}{i})$. 
With one agent, the agent should follow $\Pol{S}{i}$ for all $s \in C^{cl}_i$ so that the policy does not have infinite KL divergence \cite{karabag2021}. The agent should also follow $\Pol{S}{i}$ on $R^\mathcal{A}_i$. These properties hold for Problem~\ref{prob:AgentSimple} as well, and set $\sd{i} = S_i \setminus (C^{cl}_i \cup R^\mathcal{A}_i)$ contains states on which we modify $\Pol{A}{i}$ from $\Pol{S}{i}$.

We now restate a result from \cite{karabag2021} on the finiteness of occupancy measure. Let Problem~\ref{prob:AgentSimple}a be the case of Problem~\ref{prob:AgentSimple} with a single-agent.
\begin{proposition}
    \label{prop:Karabag1}
    \cite{karabag2021}
    If Problem~\ref{prob:AgentSimple}a has finite optimal value, with optimal policy $\Pol{A}{}$, the state-action occupancy measure $x_{s,a}$ is finite for all $s \in S_d$ and $a \in A(s)$.
\end{proposition}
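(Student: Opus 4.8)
The plan is to prove the claim by contradiction, exploiting the finiteness of the objective (which bundles together the reachability constraint and the KL-divergence objective). Suppose that under the optimal policy $\Pol{A}{}$ some state-action pair $(s,a)$ with $s \in S_d$ has infinite occupancy measure $x_{s,a} = \infty$. Since $x_{s,a} = \sum_{t=0}^\infty \Pr(s_t = s \mid s_0)\Pol{A}{}(s,a)$, and $\Pol{A}{}(s,a)$ is a fixed constant, infinitude of $x_{s,a}$ forces the expected number of visits to $s$ itself to be infinite. Because $s \in S_d = S \setminus (C^{cl} \cup R^\mathcal{A})$, the state $s$ lies outside every closed communicating class of the Markov chain induced by $(\mathcal{M}, \Pol{S}{})$; the first step is therefore to argue that under $\Pol{A}{}$, $s$ must either also lie outside the closed communicating classes or, if it has become recurrent, the chain $(\mathcal{M}, \Pol{A}{})$ visits $s$ infinitely often with positive probability.

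Next I would localize the divergence. Consider the path distribution: infinitely many expected returns to $s$ means there is positive probability of infinitely many returns to $s$ (a transient state would have geometrically decaying return probabilities summing to something finite, so positivity of the sum forces positive probability of infinitely many visits). Each time the chain is at $s$, it picks action $a$ with probability $\Pol{A}{}(s,a) > 0$. The key point is that for $(\mathbf{x}, \Pol{S}{})$ to have finite KL divergence via formula (\ref{eq:KLOccupancy}), on any state visited infinitely often the policy $\Pol{A}{}$ must agree with $\Pol{S}{}$ on the induced transition probabilities $\pi_{s,q}$; otherwise each visit contributes a fixed positive amount to the expected log-likelihood ratio and the KL divergence — which equals $\KL{\Pol{A}{}}{\Pol{S}{}}$ — diverges. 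So the finiteness of the optimal value of Problem~\ref{prob:AgentSimple}a forces $\Pol{A}{}$ to coincide with $\Pol{S}{}$ (in transition probability) at $s$ and at every state reachable with positive probability from $s$ infinitely often. But then the recurrent behavior of $(\mathcal{M}, \Pol{A}{})$ around $s$ mirrors that of $(\mathcal{M}, \Pol{S}{})$, and $s$ would have to belong to a closed communicating class of $(\mathcal{M}, \Pol{S}{})$ — contradicting $s \in S_d$. More carefully: any state visited infinitely often under $\Pol{A}{}$ with positive probability is recurrent under $\Pol{A}{}$; the set of such recurrent states reachable from $s$ forms a closed communicating class of $(\mathcal{M},\Pol{A}{})$, and since $\Pol{A}{} = \Pol{S}{}$ on transitions there, it is also a closed communicating class of $(\mathcal{M}, \Pol{S}{})$, so $s \in C^{cl}$, contradiction.

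The main obstacle I anticipate is the careful bookkeeping linking three notions that all must be reconciled: (i) infinite expected occupancy, (ii) positive probability of infinitely many visits, and (iii) the structure of communicating classes. In particular one must handle the subtlety that $\Pol{A}{}$ could in principle differ from $\Pol{S}{}$ on states that are visited only finitely often with probability one (these contribute finitely to KL and do not create infinite occupancy), so the argument must isolate exactly the states whose occupancy is infinite and show finiteness of KL forces policy agreement precisely there. A clean way to do this is to note that $\textrm{KL}(\Gamma^{\Pol{A}{}} \| \Gamma^{\Pol{S}{}}) < \infty$ implies $\Gamma^{\Pol{A}{}}$ is absolutely continuous with respect to $\Gamma^{\Pol{S}{}}$ on paths, and then decompose the expected log-likelihood ratio per state-action as in (\ref{eq:KLOccupancy}): each pair with $x_{s,a} > 0$ and $\Pol{A}{}(s,a)/\Pol{S}{}(s,a) \neq 1$ contributes at least $x_{s,a}\,D$ for some fixed $D > 0$ determined by that local KL term, so $x_{s,a} = \infty$ at such a pair already yields $+\infty$. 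Hence the only way to have $x_{s,a} = \infty$ with finite total KL is $\Pol{A}{}(s,\cdot) = \Pol{S}{}(s,\cdot)$ at $s$ — and the same must hold at every state downstream of $s$ that inherits infinite occupancy — which pushes $s$ into a closed class of $(\mathcal{M},\Pol{S}{})$ and contradicts $s \in S_d$. Since the statement in the excerpt is quoted verbatim from \cite{karabag2021}, I would additionally just cite that reference for the detailed argument, presenting the above as the proof sketch.
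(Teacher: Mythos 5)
The paper itself gives no proof of this proposition --- it is imported verbatim from \cite{karabag2021} --- so there is nothing to compare against line by line; judged on its own, your argument is correct and is essentially the standard one: infinite occupancy at $s\in S_d$ forces $s$ to be recurrent under $\Pol{A}{}$, finiteness of the KL divergence then forces the induced transition probabilities of $\Pol{A}{}$ and $\Pol{S}{}$ to agree on every infinitely-occupied state, and the resulting recurrent class is therefore a closed communicating class of the reference chain, contradicting $s\notin C^{cl}$. One small imprecision in your final paragraph: the condition that triggers an infinite contribution is a nonzero \emph{local KL between induced transition distributions} $\pi_{s,q}$, not $\Pol{A}{}(s,a)/\Pol{S}{}(s,a)\neq 1$ for some action (two distinct action distributions can induce identical state transitions, and the path distribution only records states); you state this correctly in the body of the argument, and the closed-class contradiction indeed only requires agreement of transition probabilities, so this does not affect the validity of the proof.
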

This result extends to Problem~\ref{prob:AgentSimple}.
Let Problem~\ref{prob:AgentSimple} have finite optimal value, with optimal policies $\Pol{A}{i}$, and define $\nu_i$ as $\Pr(s_0^i \models \Diamond \ReachSet{A}{i})$ under $\Pol{A}{i}$. Define P1i as
the single-agent problem of minimizing the KL divergence of $i$ with probability threshold $\nu_\mathcal{A}' = \nu_i$.
By construction, $\Pol{A}{i}$ is feasible for $\textrm{P1i}$, and its KL divergence is finite. Proposition~\ref{prop:Karabag1} then implies that solution $\pi^{\mathcal{A}*}_i$ to $\textrm{P1i}$ has finite occupancy measure on $\sd{i}$. As $\pi^{\mathcal{A}*}_i$ is optimal, it also has lower KL divergence than $\pi^{\mathcal{A}}_i$.
As such, for each agent $i\in[n]$, we replace $\Pol{A}{i}$ with $\pi^{\mathcal{A}*}_i$, to construct an optimal solution to Problem~\ref{prob:AgentSimple} comprised of policies with finite occupancy measure.

The following result also trivially extends to the problem we explore due to the decentralized approach we take.
\begin{proposition}
    \label{prop:Karabag2}
    \cite{karabag2021}
    For any policy $\Pol{A}{}$ that satisfies $\Pr(s_0\models  R^\mathcal{A}) \geq \nu_\mathcal{A}$, there exists a stationary policy $\pi^{\mathcal{A},St} \in \polSpace{}$ that satisfies 
    $\Pr(s_0\models  R^\mathcal{A}) \geq \nu_\mathcal{A}$ and 
    \begin{equation}
    \textrm{KL}\left(\Gamma^{\pi^{\mathcal{A},St}}||\Gamma^{\Pol{S}{}} \right) \leq \textrm{KL}\left(\Gamma^{\Pol{A}{}}||\Gamma^{\Pol{S}{}}\right).
    \end{equation}
\end{proposition}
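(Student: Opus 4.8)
The plan is to \emph{stationarize} the given policy through its occupancy measure and then exploit convexity of the KL divergence. Let $\pi^{\mathcal{A}}$ be an arbitrary (history-dependent, randomized) policy with $\Pr(s_0 \models \Diamond R^{\mathcal{A}}) \geq \nu_{\mathcal{A}}$. If $\textrm{KL}(\Gamma^{\pi^{\mathcal{A}}}\|\Gamma^{\pi^{\mathcal{S}}}) = \infty$ there is nothing to prove, since a stationary policy meeting the reachability constraint always exists by standard MDP theory; so assume this divergence is finite. Define the occupancy measure $x_{s,a} = \sum_{t \geq 0}\Pr^{\pi^{\mathcal{A}}}(s_t = s, a_t = a \mid s_0)$ and the induced stationary policy $\pi^{\mathcal{A},St}(s,a) = x_{s,a}/\sum_{a'} x_{s,a'}$ on states with positive total occupancy, setting $\pi^{\mathcal{A},St}(s,\cdot) = \pi^{\mathcal{S}}(s,\cdot)$ elsewhere. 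Using the state-space decomposition of $(\mathcal{M},\pi^{\mathcal{S}})$ preceding Proposition~\ref{prop:Karabag1}, finiteness of the path divergence forces $\pi^{\mathcal{A}}$ to use only transitions in the support of $\pi^{\mathcal{S}}$ and (after replacing $\pi^{\mathcal{A}}$ by a policy that agrees with $\pi^{\mathcal{S}}$ on $C^{cl}$ and on $R^{\mathcal{A}}$, which cannot increase its divergence and preserves reachability because $R^{\mathcal{A}}$ is absorbing) yields, via Proposition~\ref{prop:Karabag1}, that $x_{s,a}$ is finite for all $s \in S_d$, so $\pi^{\mathcal{A},St}$ is well defined.

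First I would show reachability is preserved. The occupancy measure of any policy satisfies the flow-balance equations, and it is standard that the stationary policy induced by an occupancy measure reproduces exactly that occupancy measure. Since every $s \in R^{\mathcal{A}}$ is absorbing, $\Pr(s_0 \models \Diamond R^{\mathcal{A}})$ equals the total occupancy flow into $R^{\mathcal{A}}$ — precisely expression (\ref{eq:Reachability}) — which depends on the policy only through $(x_{s,a})$. Hence $\pi^{\mathcal{A},St}$ attains the same reachability probability as $\pi^{\mathcal{A}}$, so the constraint still holds.

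Next I would bound the divergence. By the chain rule for KL divergence between the two path distributions, and because $\pi^{\mathcal{S}}$ is stationary,
\[
\textrm{KL}\!\left(\Gamma^{\pi^{\mathcal{A}}}\,\big\|\,\Gamma^{\pi^{\mathcal{S}}}\right) = \mathbb{E}_{\pi^{\mathcal{A}}}\!\left[\sum_{t \geq 0} \textrm{KL}\!\left(\pi^{\mathcal{A}}(\cdot \mid h_t)\,\big\|\,\pi^{\mathcal{S}}(s_t,\cdot)\right)\right],
\]
where $h_t$ is the history through time $t$ and the interchange of $\mathbb{E}$ and the infinite sum is justified by Tonelli since all terms are non-negative. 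Grouping the right-hand side by the current state $s$, the total contribution of $s$ equals $\big(\sum_{a'} x_{s,a'}\big)$ times a convex combination — with weights proportional to the visit probabilities of the histories reaching $s$ — of the terms $\textrm{KL}(\pi^{\mathcal{A}}(\cdot\mid h_t)\,\|\,\pi^{\mathcal{S}}(s,\cdot))$, and the corresponding convex combination of the \emph{first} arguments is exactly $\pi^{\mathcal{A},St}(s,\cdot)$. By joint convexity of KL divergence (equivalently, the log-sum inequality), $\textrm{KL}(\pi^{\mathcal{A},St}(s,\cdot)\,\|\,\pi^{\mathcal{S}}(s,\cdot))$ is at most that convex combination. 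Summing over $s$ and recognizing the left-hand side via the occupancy-measure identity (\ref{eq:KLOccupancy}) for $\pi^{\mathcal{A},St}$ gives $\textrm{KL}(\Gamma^{\pi^{\mathcal{A},St}}\|\Gamma^{\pi^{\mathcal{S}}}) \leq \textrm{KL}(\Gamma^{\pi^{\mathcal{A}}}\|\Gamma^{\pi^{\mathcal{S}}})$.

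The main obstacle is the infinite-horizon bookkeeping for non-stationary behavior: one must rigorously convert finiteness of the path divergence into finiteness of the occupancy on the transient deviation set $S_d$ (so that $\pi^{\mathcal{A},St}$ is well defined and the per-state groupings converge) and argue that deviations inside the closed communicating classes of $(\mathcal{M},\pi^{\mathcal{S}})$ can be removed for free — this is exactly the role of Proposition~\ref{prop:Karabag1} and the accompanying state-space decomposition. Once these technical points are in place, the conceptual content is simply ``average the history-conditioned action distributions and apply convexity of KL.'' The multi-agent extension is then immediate: apply the single-agent statement to each agent separately, so every agent's individual reachability probability is unchanged and its divergence does not increase, whence the disjunctive reachability $1-\prod_i(1-\nu_i)$ is unchanged and $\max_i \textrm{KL}$ does not increase.
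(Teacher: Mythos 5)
The paper itself does not prove this proposition --- it is imported as a citation from \cite{karabag2021} --- and your route (stationarize via the occupancy measure, then apply convexity of KL) is the same argument used in that reference, so in substance your proof is correct and the supporting machinery (finiteness of $x_{s,a}$ on $S_d$ via Proposition~\ref{prop:Karabag1}, restricting deviations to $S_d$, reachability preserved because it is a function of the occupancy flow into the absorbing set $R^{\mathcal{A}}$, and the trivial infinite-divergence case) is handled appropriately. One imprecision is worth fixing: the supervisor observes state paths, so the chain-rule decomposition of $\textrm{KL}(\Gamma^{\pi^{\mathcal{A}}}\|\Gamma^{\pi^{\mathcal{S}}})$ is an \emph{equality} only when written in terms of the history-conditioned next-state distributions $P^{\pi^{\mathcal{A}}}(\cdot\mid h_t)=\sum_a \pi^{\mathcal{A}}(a\mid h_t)P(s_t,a,\cdot)$ against $\pi^{\mathcal{S}}_{s_t,\cdot}$, not the action distributions you wrote; with action-level KLs the per-step sum only upper-bounds the path divergence (data processing), so your final chain of inequalities would not close. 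The repair is cosmetic: the convex combination, over histories ending at $s$, of the next-state distributions is exactly $\pi^{\mathcal{A},St}_{s,\cdot}$ because $x_{s,q}=\sum_{a}x_{s,a}P(s,a,q)$, and convexity of KL in its first argument then gives precisely the occupancy-measure expression (\ref{eq:KLOccupancy}) for the stationarized policy as a lower bound of the grouped sum.
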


\paragraph{Proof of Proposition~\ref{prop:OptimizationWellPosedness}.}
This proposition follows from the proof of the equivalence of Problem~\ref{prob:AgentSimple} and (\ref{eq:OccupancyProblemInfMax}) in the single-agent case, given in \cite{karabag2021}. 

The extensions of Propositions~\ref{prop:Karabag1} and \ref{prop:Karabag2} to multiple agents justify the restriction to stationary policies with finite occupancy measures for Problem~\ref{prob:AgentSimple}.

Regarding the existence of a policy that attains the optimal value of (\ref{eq:OccupancyProblemInfMax}), the only comment that needs to be made to extend the proof in \cite{karabag2021}, is that the objective 
    $\max_{i\in[n]} \ \textrm{KL}(\mathbf{x}_i,\Pol{S}{i})$
remains continuous in $\mathbf{x}_i$. $\square$

\subsection{Optimality and Algorithms for Problem~\ref{prob:AgentSimple}}
\label{sec:OptimizationProofs}
\paragraph{Proof of Theorem~\ref{the:locGlobMin}.} 
We show that any strictly sub-optimal point is not a local minimum. 
Let $(\mathbf{x}_i)_{i=1}^n$ be strictly sub-optimal, and let $(\mathbf{x}^*_i)_{i=1}^n$ be globally optimal. Let $\nu_i = \nu(\mathbf{x}_i, R^\mathcal{A}_i)$, $\nu^*_i = \nu(\mathbf{x}^*_i, R^\mathcal{A}_i)$, and $K_i(\mathbf{x}) = \textrm{KL}(\mathbf{x}, \pi^{\mathcal{S}}_i)$.

\textit{Case 1.} Assume constraint (\ref{eq:DisjunctiveReachability}) is not tight. 
Define new occupancy measures through the convex combination $\mathbf{x}'_i = \mathbf{x}_i + \theta (\mathbf{x}_i^* - \mathbf{x}_i)$ for all $i \in [n]$. 
Constraints (\ref{subeq:FiniteOccupancy1}) and (\ref{subeq:FiniteOccupancy2}) define convex sets, and so they will hold for $\mathbf{x}'_i$.
The left hand side of (\ref{eq:DisjunctiveReachability}) is continuous in $\mathbf{x}$, and constraint (\ref{eq:DisjunctiveReachability}) is not tight, so for sufficiently small $\theta > 0$ we have $\prod_{i=1}^{n} (1 - \nu(\mathbf{x}_i', R^\mathcal{A}_i)) \leq 1 - \nu_\mathcal{A}$.
Also, as $(\mathbf{x}_i)_{i=1}^n$ is strictly sub-optimal, we have that $\max_i \  K_i(\mathbf{x}_i) > \max_i \ K_i(\mathbf{x}_i^*)$, and hence
$\max_i \  K_i(\mathbf{x}_i) >\max_i \ K_i(\mathbf{x}_i')$ for $\theta > 0$. By the above observations, we may generate feasible $(\mathbf{x}_i')_{i=1}^n$ arbitrarily close to $(\mathbf{x}_i)_{i=1}^n$ with strictly lower objective value, which demonstrates that $(\mathbf{x}_i)_{i=1}^n$ cannot be a local minimum.

\textit{Case 2.} Now, allow constraint (\ref{eq:DisjunctiveReachability}) to hold with equality such that $\prod_{i=1}^{n} (1 - \nu_i) = 1 - \nu_\mathcal{A}$. As $(\mathbf{x}^*_i)_{i=1}^n$ is feasible we have $\prod_{i=1}^{n} (1 - \nu_i) \geq \prod_{i=1}^{n} (1 - \nu_i^*)$.
If $\nu_i = \nu_i^*$ for all $i\in[n]$, then convex combinations of $(\mathbf{x}_i)_{i=1}^n$ and $(\mathbf{x}_i^*)_{i=1}^n$ remain feasible, and by the arguments of \textit{Case 1}, $(\mathbf{x}_i)_{i=1}^n$ is strictly sub-optimal.
If there exists $i\in[n]$ such that $\nu_i \neq \nu_i^*$, then there exists $j$ such that $\nu_j < \nu_j^*$. In this case, we first construct intermediate occupancy measures by defining $\mathbf{x}_j' = \theta \mathbf{x}_j + (1-\theta) \mathbf{x}_j^*$ for some $\theta > 0$, and $\mathbf{x}_i' = \mathbf{x}_i$ for all $i \neq j$. 
By convexity of KL in the occupancy measure, we then have that $K_j(\mathbf{x}_j') \leq \max(K_j(\mathbf{x}_j),K_j(\mathbf{x}_j^*)) \leq \max_i \ K_i(\mathbf{x}_i)$ and as such, $\max_i \ K_i(\mathbf{x}_i') \leq \max_i \ K_i(\mathbf{x}_i)$. 
For $\theta > 0$, this construction produces point $\mathbf{x}'$ that satisfies reachability strictly. We can then appeal to \textit{Case 1} to produce point $\mathbf{x}''$ close to $\mathbf{x}'$ with a strictly lower value of the objective function.  Point $\mathbf{x}'$ can also be made arbitrarily close to $\mathbf{x}$. As such, we may make point $\mathbf{x}''$, satisfying $\max_i \ K_i(\mathbf{x}_i'') < \max_i \ K_i(\mathbf{x}_i') \leq  \max_i \ K_i(\mathbf{x}_i)$, arbitrarily close to $\mathbf{x}$, which implies that $\mathbf{x}$ cannot be a local minimum. $\square$

\paragraph{Proof of Theorem~\ref{the:LineSearchConvergenceBasic}.} 
Let $\nu_\Delta(K) = \textproc{ReachEvaluate}((\Pol{S}{i}, R_i^\mathcal{A})_{i=1}^n,\nu_\mathcal{A},K)$. 
We first note that
    $K_1 < K_2$ implies  $\textproc{Reach}(\pi^\mathcal{S}_i, R^\mathcal{A}_i, K_1) \leq \textproc{Reach}(\pi^\mathcal{S}_i, R^\mathcal{A}_i, K_2)$ for all $i\in[n]$, which implies $\nu_\Delta$ is increasing.
This inequality holds as the latter $\textproc{Reach}$ problem is a relaxation of the former for all $i\in[n]$. 

We now discuss the initial values of $\overline{K}$ and $\underline{K}$, which are $K_{\max}$ and $0$ respectively.
As $K_{\max}$ is a bound on the optimal value, $\nu_\Delta(K_{\max}) \geq 0$. 
If $\nu_\Delta(0)\geq 0$, it is optimal to use $\Pol{A}{i} = \Pol{S}{i}$ for all $i$. As such, we may assume $\nu_\Delta(0)$ < 0.

Consider the final values $\overline{K}_t$ and $\underline{K}_t$.
At each bisection iteration we compute $K = (\underline{K} + \overline{K})/2$. 
If $\nu_\Delta(K) < 0$, we then set $\underline{K} = K$, or if $\nu_\Delta(K) \geq 0$, we set $\overline{K} = K$.
By the $\textproc{Bisection}$ definition, and the fact that $\nu_\Delta(0) < 0$, we have $\nu_\Delta(\underline{K}_t) < 0$.
Using monotonicity of $\nu_\Delta$, we then conclude that there do not exist feasible policies that satisfy $\max_i \ KL(\mathbf{x}_i, \pi^\mathcal{S}_i) < \underline{K}_t$, and so, $K^* \geq \underline{K}_t$. 
Again, by the $\textproc{Bisection}$ definition and the fact that $\nu_\Delta(K_{\max}) \geq 0$, we have $\nu_\Delta(\overline{K}_t) \geq 0$ and we may observe that $K^* \leq \overline{K}_t$.
At termination, we have $\overline{K}_t \leq \underline{K}_t + \varepsilon$, as $\varepsilon$ is the tolerance. We finally conclude 
$K^* \leq \overline{K}_t \leq \underline{K}_t + \varepsilon \leq K^* + \varepsilon$. $\square$

\begin{credits}

\subsubsection{\ackname} This work was supported partially by the Army Research Laboratory (ARL), under grant number W911NF-23-1-0317, partially by the Defence
Advanced Research Projects Agency (DARPA), under grant number HR001123S0001,
and partially by the Oﬃce of Naval Research (ONR), under grant number
N000142412432.
\subsubsection{\discintname}
The authors have no competing interests.
\end{credits}
%
%
%
\bibliographystyle{splncs04}
\bibliography{MultiAgentDeception}

\begin{thebibliography}{10}
\providecommand{\url}[1]{\texttt{#1}}
\providecommand{\urlprefix}{URL }
\providecommand{\doi}[1]{https://doi.org/#1}

\bibitem{aitchison2021learning}
Aitchison, M., Benke, L., Sweetser, P.: Learning to deceive in multi-agent hidden role games. In: Deceptive AI. pp. 55--75. Communications in Computer and Information Science (2021)

\bibitem{bahnemann2017}
B{\"a}hnemann, R., Schindler, D., Kamel, M., Siegwart, R., Nieto, J.: A decentralized multi-agent unmanned aerial system to search, pick up, and relocate objects. In: IEEE International Symposium on Safety, Security and Rescue Robotics. pp. 123--128 (2017)

\bibitem{bai2017data}
Bai, C.Z., Pasqualetti, F., Gupta, V.: Data-injection attacks in stochastic control systems: Detectability and performance tradeoffs. Automatica  \textbf{82},  251--260 (2017)

\bibitem{becker2004solving}
Becker, R., Zilberstein, S., Lesser, V., Goldman, C.V.: Solving transition independent decentralized {M}arkov decision processes. Journal of Artificial Intelligence Research  \textbf{22},  423--455 (2004)

\bibitem{bernstein2002complexity}
Bernstein, D.S., Givan, R., Immerman, N., Zilberstein, S.: The complexity of decentralized control of {M}arkov decision processes. Mathematics of Operations Research  \textbf{27}(4),  819--840 (2002)

\bibitem{carminati2023hidden}
Carminati, L., Zhang, B.H., Farina, G., Gatti, N., Sandholm, T.: Hidden-role games: Equilibrium concepts and computation. arXiv preprint arXiv:2308.16017  (2023)

\bibitem{chen2022deceptive}
Chen, S., Savas, Y., Karabag, M.O., Sadler, B.M., Topcu, U.: Deceptive planning for resource allocation. In: American Control Conference (2024)

\bibitem{cover1999}
Cover, T.M., Thomas, J.A.: Elements of information theory. John Wiley \& Sons (1999)

\bibitem{dragan2014}
Dragan, A., Holladay, R., Srinivasa, S.: An analysis of deceptive robot motion. In: Robotics: Science and Systems (2014)

\bibitem{fatemi2024deceptive}
Fatemi, M.Y., Suttle, W.A., Sadler, B.M.: Deceptive path planning via reinforcement learning with graph neural networks. International Conference on Autonomous Agents and Multi-agent Systems pp. 2258--2260 (2024)

\bibitem{fu2022almost}
Fu, J.: On almost-sure intention deception planning that exploits imperfect observers. In: International Conference on Decision and Game Theory for Security. pp. 67--86 (2022)

\bibitem{ghiya2020learning}
Ghiya, S., Sycara, K.: Learning complex multi-agent policies in presence of an adversary. In: IROS Workshop on Trends and Advances in Machine Learning and Automated Reasoning for Intelligent Robots and Systems. (2020). \doi{10.48550/arXiv.2008.07698}

\bibitem{han2018}
Han, X., Kheir, N., Balzarotti, D.: Deception techniques in computer security: A research perspective. ACM Computing Surveys  \textbf{51}(4),  80:1--80:36 (2018)

\bibitem{he2024security}
He, Z., Yuan, J., Ran, N., Yin, X.: Security-based path planning of multi-robot systems by partially observed {P}etri nets and integer linear programming. IEEE Control Systems Letters  \textbf{8},  352--357 (2024)

\bibitem{janczewski2007}
Janczewski, L.J., Colarik, A.M.: Cyber warfare and cyber terrorism. IGI Global (2008)

\bibitem{kanellopoulos2021bounded}
Kanellopoulos, A., Vamvoudakis, K.G.: Bounded rationality in {B}yzantine sensors under attacks. IEEE Transactions on Automatic Control  \textbf{67}(7),  3606--3613 (2022)

\bibitem{karabag2023decision}
Karabag, M.O.: Decision-making for autonomous agents in adversarial or information-scarce settings. Ph.D. thesis, The University of Texas at Austin (2023)

\bibitem{karabag2021}
Karabag, M.O., Ornik, M., Topcu, U.: Deception in supervisory control. IEEE Transactions on Automatic Control  \textbf{67}(2),  738--753 (2022)

\bibitem{karabag2022}
Karabag, M.O., Ornik, M., Topcu, U.: Exploiting partial observability for optimal deception. IEEE Transactions on Automatic Control  \textbf{68}(7),  4443--4450 (2023)

\bibitem{karabag2024}
Karabag, M.O., Ornik, M., Topcu, U.: Identity concealment games: How {I} learned to stop revealing and love the coincidences. Automatica  \textbf{161},  111482 (2024)

\bibitem{keroglou2018probabilistic}
Keroglou, C., Hadjicostis, C.N.: Probabilistic system opacity in discrete event systems. Discrete Event Dynamic Systems  \textbf{28},  289--314 (2018)

\bibitem{khazraei2022resiliency}
Khazraei, A., Pajic, M.: Resiliency of nonlinear control systems to stealthy sensor attacks. In: IEEE Conference on Decision and Control. pp. 7109--7114 (2022)

\bibitem{lin2019}
Lin, T., Jin, C., Jordan, M.I.: On gradient descent ascent for nonconvex-concave minimax problems. In: International Conference on Machine Learning. pp. 6083--6093 (2020)

\bibitem{lowe2017multi}
Lowe, R., Wu, Y., Tamar, A., Harb, J., Abbeel, P., Mordatch, I.: Multi-agent actor-critic for mixed cooperative-competitive environments. Advances in Neural Information Processing Systems  \textbf{30},  6379--6390 (2017)

\bibitem{ma2023covert}
Ma, H., Shi, C., Han, S., Dorothy, M.R., Fu, J.: Covert planning against imperfect observers. In: International Conference on Autonomous Agents and Multi-agent Systems. pp. 1319--1327 (2024)

\bibitem{masters2017deceptive}
Masters, P., Sardina, S.: Deceptive path-planning. In: International Joint Conference on Artificial Intelligence. pp. 4368--4375 (2017)

\bibitem{mu2023quantified}
Mu, C., Pang, J.: On quantified observability analysis in multiagent systems. In: European Conference on Artificial Intelligence. vol.~372, pp. 1755--1762 (2023)

\bibitem{de2021constrained}
de~Nijs, F., Walraven, E., de~Weerdt, M.M., Spaan, M.T.J.: Constrained multiagent {M}arkov decision processes: A taxonomy of problems and algorithms. Journal of Artificial Intelligence Research  \textbf{70},  955--1001 (2021)

\bibitem{patil2023}
Patil, A., Karabag, M.O., Tanaka, T., Topcu, U.: Simulator-driven deceptive control via path integral approach. In: IEEE Conference on Decision and Control. pp. 271--277 (2023)

\bibitem{pettinati2021wolves}
Pettinati, M.J., Arkin, R.C., Krishnan, A.: Wolves in sheep’s clothing: Using shill agents to misdirect multi-robot teams. In: Deceptive AI. pp. 41--54. Communications in Computer and Information Science (2021)

\bibitem{pferschy2019}
Pferschy, U., Nicosia, G., Pacifici, A.: A {S}tackelberg knapsack game with weight control. Theoretical Computer Science  \textbf{799},  149--159 (2019)

\bibitem{pferschy2021}
Pferschy, U., Nicosia, G., Pacifici, A., Schauer, J.: On the {S}tackelberg knapsack game. European Journal of Operational Research  \textbf{291}(1),  18--31 (2021)

\bibitem{serrino2019finding}
Serrino, J., Kleiman-Weiner, M., Parkes, D.C., Tenenbaum, J.: Finding friend and foe in multi-agent games. Advances in Neural Information Processing Systems  \textbf{32},  1251--1261 (2019)

\bibitem{shi2023security}
Shi, W., He, Z., Ma, Z., Ran, N., Yin, X.: Security-preserving multi-robot path planning for {B}oolean specification tasks using labeled {P}etri nets. IEEE Control Systems Letters  \textbf{7},  2017--2022 (2023)

\bibitem{strouse2018learning}
Strouse, D., Kleiman-Weiner, M., Tenenbaum, J., Botvinick, M., Schwab, D.: Learning to share and hide intentions using information regularization. Advances in Neural Information Processing Systems pp. 10249--10259 (2018)

\bibitem{undurti2011decentralized}
Undurti, A., How, J.P.: A decentralized approach to multi-agent planning in the presence of constraints and uncertainty. In: IEEE International Conference on Robotics and Automation. pp. 2534--2539 (2011)

\bibitem{yu2015}
Yu, D., Tyshchuk, Y., Ji, H., Wallace, W.: Detecting deceptive groups using conversations and network analysis. In: International Joint Conference on Natural Language Processing. pp. 857--866 (2015)

\bibitem{yu2022security}
Yu, X., Yin, X., Li, S., Li, Z.: Security-preserving multi-agent coordination for complex temporal logic tasks. Control Engineering Practice  \textbf{123}(105130) (2022)

\end{thebibliography}
\end{document}